\documentclass[times]{amsart}


\usepackage{amssymb,amsmath}
\usepackage[colorlinks,plainpages,backref]{hyperref}
\usepackage{mathrsfs}
\usepackage[neveradjust]{paralist}
\usepackage{epic,eepic}
\usepackage{url,epsfig,psfrag,graphicx}
\usepackage{float,wrapfig}

\restylefloat{figure}

\theoremstyle{definition}
\newtheorem{ntn}{Notation}[section]
\newtheorem*{notn}{Notation}
\newtheorem{dfn}[ntn]{Definition}

\theoremstyle{plain}
\newtheorem{lem}[ntn]{Lemma}

\newtheorem{thm}[ntn]{Theorem}
\newtheorem{cor}[ntn]{Corollary}

\newtheorem{cnv}[ntn]{Convention}
\theoremstyle{remark}
\newtheorem*{clm}{Claim}

\newtheorem{rmk}[ntn]{Remark}
\newtheorem{exa}[ntn]{Example}


\newcommand{\boldone}{{\mathbf{1}}}
\newcommand{\bolda}{{\mathbf{a}}}
\newcommand{\boldb}{{\mathbf{b}}}
\newcommand{\boldc}{{\mathbf{c}}}

\newcommand{\boldp}{{\mathbf{p}}}
\newcommand{\boldq}{{\mathbf{q}}}
\newcommand{\boldr}{{\mathbf{r}}}
\newcommand{\boldu}{{\mathbf{u}}}
\newcommand{\boldv}{{\mathbf{v}}}

\newcommand{\del}{\partial}

\newcommand{\de}{{\mathrm d}}

\newcommand{\eps}{\varepsilon}

\newcommand{\onto}{\twoheadrightarrow}
\renewcommand{\to}{\longrightarrow}

\newcommand{\calE}{{\mathscr{E}}}
\newcommand{\calF}{\mathscr{F}}

\newcommand{\calH}{\mathscr{H}}

\newcommand{\calK}{\mathscr{K}}

\newcommand{\CC}{\mathbb{C}}

\newcommand{\NN}{\mathbb{N}}

\newcommand{\QQ}{\mathbb{Q}}
\newcommand{\RR}{\mathbb{R}}
\newcommand{\ZZ}{\mathbb{Z}}

\renewcommand{\bar}{\overline}

\DeclareMathOperator{\gr}{gr}

\DeclareMathOperator{\Hom}{Hom}

\DeclareMathOperator{\Li}{Li}

\DeclareMathOperator{\Proj}{Proj}
\DeclareMathOperator{\qdeg}{qdeg}
\DeclareMathOperator{\rk}{rk}

\DeclareMathOperator{\Spec}{Spec}

\DeclareMathOperator{\Tor}{Tor}
\DeclareMathOperator{\Var}{Var}

\def\schluss{\hfill\ensuremath{\diamond}}

\def\comment#1{}

\begin{document}

\title
{On the $b$-functions of hypergeometric systems
}

\author{Thomas Reichelt, Christian Sevenheck, Uli Walther}


\address{T.~Reichelt\\
Universit\"at Heidelberg\\
Mathematisches Institut\\
Im Neuenheimer Feld 205\\
69120 Heidelberg\\
Germany
}
\email{reichelt@mathi.uni-heidelberg.de}
\address{C.\ Sevenheck\\
Technische Universit\"at Chemnitz\\
Fakult\"at f\"ur Mathematik\\
09107 Chemnitz\\
Germany
}
\email{christian.sevenheck@mathematik.tu-chemnitz.de }
\address{
U.\ Walther\\
Purdue University\\
Dept.\ of Mathematics\\
150 N.\ University St.\\
West Lafayette, IN 47907\\
USA}
\email{walther@math.purdue.edu}


%



\begin{abstract}
For any integer $d\times (n+1)$ matrix $A$ and parameter $\beta\in\CC^d$
let $M_A(\beta)$ be the associated $A$-hypergeometric (or GKZ) system
in the variables $x_0,\ldots,x_n$. We describe bounds for the (roots of the)
$b$-functions of both $M_A(\beta)$ and its Fourier
transform along the hyperplanes $(x_j=0)$. We also give an estimate
for the $b$-function for restricting $M_A(\beta)$ to a  generic point.
\end{abstract}


\thanks{TR was supported by a DFG Emmy-Noether-Fellowship (RE 3567/1-1),
TR and CS acknowledge partial support by the ANR/DFG joint program
SISYPH (ANR-13-IS01-0001-01 and SE~1115/5-1). UW was supported by the
NSF under grant 1401392-DMS.}

\maketitle

\setcounter{tocdepth}{1}

\tableofcontents

\numberwithin{equation}{section}

\bigskip

Let $D$ be the ring of algebraic $\CC$-linear differential
operators on $\CC^{n+1}$ with coordinates $x_0,\ldots,x_n$.

\begin{dfn}[Compare {\cite{Kashiwara-bfu,MaisonobeMebkhout-SMF04}}]
\label{dfn-bfu-hyperplane}
Let $M$ be a left $D$-module and pick an element $m\in M$ with
annihilator $I\subseteq D$.  If $(V^iD)$ is the vector space spanned
by the monomials $x^\alpha\del^\beta$ with $\alpha_0-\beta_0\geq i$
then the \emph{$b$-function} of $m\in M$ along the coordinate
hyperplane $x_0=0$ is the minimal monic polynomial $b(s)$ that satisfies:
$b(x_0\del_0)\cdot m\in (V^1D)\cdot m$ in $M$, which is to say
$b(x_0\del_0)\in I+(V^1D)$ in $D$. 

If $M$ is cyclic, i.e., $M=D/I$, then we call \emph{$b$-function 
of $M$} the $b$-function in the above sense of the element $1+I\in M$.
\end{dfn}

The $b$-function exists in greater generality along any hypersurface
$(f=0)$, as long as the module $M$ is holonomic,
cf.~\cite{Kashiwara-bfu}. The $V$-filtration of Kashiwara and
Malgrange then takes the form $(V^iD)=\{P\in D\mid f^{i+k} \text{
  divides } P\bullet f^k\text{ for }k\gg 0\}$.  Both the
$V$-filtration and the $b$-function are intimately connected to the
restriction of the given $D$-module to the hypersurface.  The purpose
of this note is to give, for any $A$-hypergeometric system as well as
its Fourier transform, an explicit arithmetic description of a bound
for the root set of the $b$-function along any coordinate hyperplane
that involves the parameter $\beta$ in a very elementary way.

We have several applications in mind: first, it is a longstanding
question to understand the monodromy of $A$-hypergeometric systems,
and for this purpose the roots of the $b$-function as considered above
can be of some use.  On the other hand, the Fourier transform of an
$A$-hypergeometric system often (see \cite{SchulzeWalther-ekdi})
appears as a direct image module under a natural torus embedding given
by the columns of the matrix $A$.  This point of view turns out to be
extremely useful for Hodge theoretic considerations of
$A$-hypergeometric systems (see \cite{Reichelt-Comp14}). It is one of
the fundamental insights of Morihiko Saito (see \cite[Section
  3.2]{Saito1}) that the boundary behavior of variations of Hodge
structures (or, more generally, of mixed Hodge modules) is controlled
by the Kashiwara--Malgrange filtration along such a boundary divisor.
In the case of a cyclic $D$-module, such as $A$-hypergeometric systems
or their Fourier transforms, one can often deduce a large part of this
filtration from the values of the $b$-function.  
We refer to
\cite{ReicheltSevenheck-15} for an immediate application of our
results.  In a third direction, one can also see our calculation of the
$b$-function of the Fourier transform as a refinement of
\cite{SchulzeWalther-ekdi,FernandezWalther-PAMS11} geared towards
restriction of $A$-hypergeometric systems.

In the last part we compute an upper bound for the $b$-function of
restriction of the $A$-hypergeometric system to a generic point, again
in elementary terms of $A$ and $\beta$. Since the restriction of a
$D$-module to a point is a dual object to the $0$-th level
solution functor, our estimate can be viewed as a step towards a
sheafification in $\beta$ of the solution space, a problem that
remains unsolved.

\subsubsection*{Acknowledgements}
We would like to thank
the Forschungsinstitut Oberwolfach for hosting us in April of 2015.
\smallskip

\noindent We are greatly indebted to an unknown referee for very careful
reading, pointing out a number of misprints.

\section{Basic notions and results}

\begin{notn}
Throughout, the base field is $\CC$ and we consider a $\CC$-vector
space $V$ of dimension $n+1$.
\end{notn}

In this introductory section we review basic facts on
$A$-hypergeometric systems as well as the Euler--Koszul
functor. Readers are advised to refer to \cite{MMW} for more detailed
explanations.

\begin{ntn}
For any integer matrix $A$, let $R_A$ (resp.\ $O_A$) be
the polynomial ring over $\CC$ generated by the variables $\del_j$
(resp.\ $x_j$)
corresponding to the columns $\bolda_j$ of $A$. We identify $O_A$ with
the symmetric algebra on $\Hom_\CC(V,\CC)\cong \bigoplus \CC\cdot x_j$.
Further, let $D_A$ be the ring of $\CC$-linear differential operators
on $O_A$, where we identify $\frac{\del}{\del x_j}$ with $\del_j$
and multiplication by $x_j$ with $x_j$
so that both $R_A$ and $O_A$ become subrings of $D_A$.
\end{ntn}

\subsection{$A$-hypergeometric systems}

Let $A=(\bolda_0,\ldots,\bolda_n)$ be an integer $d\times (n+1)$
matrix, $d\le n+1$.  For convenience we assume that $\ZZ A=\ZZ^d$. For
$(v_1,\ldots,v_r)=\boldv\in \ZZ^r$ we denote by
$\boldv_+,\boldv_-$ the vectors given by
\[
(\boldv_+)_j=\max(0,v_j) \qquad\text{and}\qquad
(\boldv_-)_j=\max(0,-v_j).
\]

For the complex parameter vector $\beta\in\CC^d$ consider the
system of $d$ \emph{homogeneity equations}
\begin{eqnarray}\label{eqn-Euler}
E_i\bullet\phi
  &=&\beta_i\cdot\phi,
\end{eqnarray}
where $E_i=\sum_{j=0}^n a_{i,j}x_j\del_j$ is the $i$-th \emph{Euler operator},
together with the \emph{toric} (partial differential) \emph{equations}
\begin{eqnarray}\label{eqn-toric}
\{(\underbrace{\del^{\boldv_+}-\del^{\boldv_-}}_{:=\Delta_\boldv})\bullet
\phi=0&\mid& A\cdot \boldv=0\}.
\end{eqnarray}
In $R_A$, the toric operators $\{\Delta_\boldv|A\cdot\boldv=0\}$
generate the \emph{toric ideal} $I_A$. The quotient
\[
S_A:=R_A/I_A
\]
is
naturally isomorphic to the semigroup ring $\CC[\NN A]$. In $D_A$, the left
ideal generated by all equations \eqref{eqn-Euler} and
\eqref{eqn-toric} is the \emph{hypergeometric ideal} $H_A(\beta)$.  We
put
\[
M_A(\beta):=D_A/H_A(\beta);
\]
this is the $A$-hypergeometric system introduced and first
investigated by Gelfand, Graev, Kapranov, and Zelevinsky,
in \cite{Gelfand-Nauk86} and a string of other papers.  \schluss

\subsection{$A$-degrees}

If the rowspan of $A$ contains $\boldone_A$ we call $A$
\emph{homogeneous}. Homogeneity is equivalent to $I_A$ defining a
projective variety, and also to the system $H_A(\beta)$ having only regular
singularities \cite{Hotta,SchulzeWalther-Duke}.  A more general
\emph{$A$-degree function} on $R_A$ and $D_A$ is induced by:
\[
-\deg_A(x_j):=\bolda_j=:\deg_A(\del_j).
\]
We denote $\deg_{A,i}(-)$ the
$A$-degree function associated to the weight given by the $i$-th row of
$A$, so $\deg_A=(\deg_{A,1},\ldots,\deg_{A,d})$.

An $R_A$- (resp.\ $D_A$-)module $M$ is \emph{$A$-graded} if it has a
decomposition $M=\bigoplus_{\alpha\in\ZZ^d} M_\alpha$ such that the
module structure respects the grading $\deg_A(-)$ on $R_A$
(resp.\ $D_A$) and $M$.  If $N$ is an $A$-graded $R_A$-module, then we
denote $\deg_A(N)\subseteq \ZZ^d$ the set of all degrees of all
non-zero homogeneous elements of $N$. The \emph{quasi-degrees}
$\qdeg_A(N)$ of $N$ are the points in the Zariski closure in $\CC^d$
of $\deg_A(N)$.

As is common, if $M$ is
$A$-graded then $M(\boldb)$ denotes for each $\boldb\in\ZZ A$ its
shift with graded structure
$(M(\boldb))_{\boldb'}=M_{\boldb+\boldb'}$.

\subsection{Euler--Koszul complex}

Since
\begin{eqnarray*}
    x^\boldu E_i-E_ix^\boldu  &=& -(A\cdot \boldu)_ix^\boldu,\\
    \del^\boldu E_i-E_i\del^\boldu &=& (A\cdot \boldu)_i\del^\boldu,
\end{eqnarray*}
we have
\begin{eqnarray}\label{eq-deg}
E_iP=P(E_i-\deg_{A,i}(P))
\end{eqnarray}
for any $A$-homogeneous $P\in D_A$ and all $i$.

On the $A$-graded $D_A$-module $M$ one can thus define
commuting $D_A$-linear endomorphisms $E_i$ via
\[
E_i\circ m:=(E_i+\deg_{A,i}(m))\cdot m
\]
for $A$-homogeneous elements $m\in M$.  In particular, if $N$ is an
$A$-graded $R_A$-module one obtains commuting sets of
$D_A$-endomorphisms on the left $D_A$-module $D_A\otimes_{R_A}N$ by
\[
E_i\circ (P\otimes Q):=(E_i+\deg_{A,i}(P)+\deg_{A,i}(Q))P\otimes Q.
\]

The \emph{Euler--Koszul complex} $\calK_\bullet(N;\beta)$ of the
$A$-graded $R_A$-module $N$ is the homological Koszul complex induced
by $E-\beta:=\{(E_i-\beta_i)\circ\}_1^d$ on $D_A\otimes_{R_A}N$. 
In
particular, the terminal module $D_A\otimes_{R_A}N$ sits in
homological degree zero.  We denote the homology groups of
$\calK_\bullet(N;\beta)$ by $\calH_\bullet(N;\beta)$.
Implicit in the notation is ``$A$'': different presentations of
semigroup rings that act on $N$ yield different Euler--Koszul
complexes. 

If $N(\boldb)$ denotes the usual
shift-of-degree functor on the category of graded $R_A$-modules, then
$\calK_\bullet(N;\beta)(\boldb)$ and
$\calK_\bullet(N(\boldb);\beta-\boldb)$ are identical.

\subsection{The toric category}

There is a bijection between faces $\tau$ of the cone $\RR_{\geq 0} A$ and
$A$-graded prime ideals $I_A^\tau=I_A+R_A\{\del_j\mid j\not\in\tau\}$
of $R_A$ containing $I_A$. If the origin is a face of $\RR_{\geq 0}A$, it
corresponds to the ideal $I^\emptyset_A=(\del_0,\ldots,\del_n)$. In
general, $R_A/I_A^\tau\cong \CC[\NN\tau]$.

An $R_A$-module $N$ is
  \emph{toric} if it is $A$-graded and has a (finite)
  $A$-graded composition chain
\[
0=N_0\subsetneq N_1\subsetneq N_2\cdots\subsetneq N_k=N
\]
such that each composition factor $N_i/N_{i-1}$ is isomorphic as
$A$-graded $R_A$-module to an $A$-graded shift
$(R_A/I_A^\tau)(\boldb)$ for some $\boldb \in\ZZ A$ and some face
$\tau$. The category of toric modules is closed under the formation of
subquotients and extensions.

For toric input $N$, the modules $\calH_\bullet(N;\beta)$ are
holonomic. As $D_A$ is $R_A$-free, any short exact sequence $0\to
N'\to N\to N''\to 0$ of $A$-graded $R_A$-modules produces a long exact
sequence of Euler--Koszul homology. If $\beta$ is not a quasi-degree
of $N$ then the complex
$\calK_\bullet(N;\beta)$ is exact, and if $N$ is a maximal
Cohen--Macaulay module then $\calK_\bullet(N;\beta)$ is a 
a resolution of
$\calH_0(N;\beta)$.

\comment{Let $N=\CC(-\alpha)$ be the graded $R_A$-module whose module structure
is that of $R_A/I_A^\emptyset=R_A/(\del_1,\ldots\del_n)\cong \CC$, and
which lives entirely inside degree $\alpha\in\ZZ^d$.
If $\beta\neq\alpha$,
$\calK_\bullet(N;\beta)$ is an exact complex while
its differentials are zero if $\beta=\alpha$.
The local cohomology groups $H^\bullet_{\del_A}(S_A)$ are the
cohomology groups of the \v Cech complex on $R_A$ induced by $\del_A$,
tensored with $S_A$. Since all modules in the \v Cech complex are
flat, $H^i_{\del_A}(S_A)=\Tor_{n-i}^{R_A}(H^n_{\del_A}(R_A),S_A)$.
}

\subsection{The Euler space}

\begin{ntn}\label{ntn-Euler-space}
The $\CC$-linear span of the Euler operators $\{E_i\}_1^d$ is called the
\emph{Euler space}. Let $E$ be in the Euler space. Then $E$ is in a
unique fashion (as $\rk(A)=d$) a
linear
combination $E=\sum c_i E_i$.
With
$\beta_E:=\sum c_i\beta_i$ we have $E-\beta_E\in
H_A(\beta)$. We further write $\deg_E(-)$ for the degree function
$\sum c_i\deg_{A,i}(-)$.
\end{ntn}
Denote $\theta_j=x_j\del_j$ and $\theta=(\theta_0,\ldots,\theta_n)$.
A linear combination $\sum_j v_j \theta_j$ is in the Euler space if
and only if the coefficient vector $\boldv =(v_0,\ldots,v_n)$,
interpreted as a linear functional on $\CC^{n+1}$ via
$\boldv((q_0,\ldots,q_n)):=\sum v_iq_i$, is the pull-back via $A$ of a
linear functional on $\CC^d$. In other words,
\[
[\boldv\cdot\theta^T=\sum_j v_j\theta_j\text{ is in the Euler
    space}]\Leftrightarrow [\boldv=\boldc\cdot A\text{ for some
  }\boldc\in \CC^d].
\]
If $L\colon \CC^{d}\to\CC$ is a linear functional then the Euler
operator in $H_A(\beta)$ corresponding to its image under
$\Hom_\CC(\CC^d,\CC)\stackrel{\cdot A}{\to}\Hom_\CC(\CC^{n+1},\CC)$ 
is denoted $E_L-\beta_L$.

\begin{lem}
\label{lem-Euler}
For any set $F$ of columns of $A$ contained in a hyperplane that
passes through the origin of $\CC^d$ but does not contain $\bolda_k$,
there is an Euler operator $E_F-\beta_F$ in $H_A(\beta)$ such that the
coefficient of $\theta_j$ in $E_F$ is zero for all $j\in F$, and equal
to $1$ for $j=k$. If $\RR_{\geq 0} F$ is a facet of $\RR_{\geq 0} A$ then
$E_F-\beta_F$ is unique.
\end{lem}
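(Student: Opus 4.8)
The plan is to recast the geometric hypothesis in terms of linear functionals on $\CC^d$ and then read off the Euler operator from the correspondence set up immediately before the lemma. A hyperplane through the origin of $\CC^d$ is the kernel of some nonzero $L_0\in\Hom_\CC(\CC^d,\CC)$, and the assumptions that this hyperplane contains all columns in $F$ but not $\bolda_k$ say precisely that $L_0(\bolda_j)=0$ for every $j\in F$ while $L_0(\bolda_k)\neq 0$. Because $\bolda_k$ is avoided, I may rescale and set $L:=L_0/L_0(\bolda_k)$, so that $L(\bolda_j)=0$ for $j\in F$ and $L(\bolda_k)=1$.

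Next, recall that the image of $L$ under $\Hom_\CC(\CC^d,\CC)\stackrel{\cdot A}{\to}\Hom_\CC(\CC^{n+1},\CC)$ is the functional whose $j$-th coordinate is $L(\bolda_j)$, so that the associated Euler operator is $E_L=\sum_j L(\bolda_j)\,\theta_j$. Its coefficient of $\theta_j$ is $0$ for $j\in F$ and $1$ for $j=k$, exactly as demanded, and $E_L-\beta_L\in H_A(\beta)$ by the construction recalled in Notation~\ref{ntn-Euler-space}, where $\beta_L$ is the scalar determined by $L$. Thus $E_F:=E_L$ and $\beta_F:=\beta_L$ settle existence.

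For uniqueness, assume $\RR_{\geq 0}F$ is a facet of $\RR_{\geq 0}A$. Since $\ZZ A=\ZZ^d$, the cone $\RR_{\geq 0}A$ is $d$-dimensional, so the facet has dimension $d-1$ and its real span $W$ has real codimension one; consequently $\mathrm{span}_\CC(F)=W\otimes_\RR\CC$ is the \emph{only} hyperplane of $\CC^d$ through the origin that contains $F$, and $L_0$ is pinned down up to scalar. Now any Euler operator of the prescribed shape equals $E_{L'}-\beta_{L'}$ for a unique functional $L'$ (uniqueness of $L'\mapsto E_{L'}$ holds because $\rk A=d$), and the prescribed coefficients force $\ker L'\supseteq\mathrm{span}_\CC(F)$ together with $L'(\bolda_k)=1$, whence $L'=L$. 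Therefore $E_F-\beta_F$ is unique, and with it $\beta_F=\beta_L$.

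The argument involves no real obstacle; the only points needing a little care are the passage between the real cone $\RR_{\geq 0}A$ and complex functionals on $\CC^d$—for which one observes that a facet already spans a real hyperplane, which has a unique complexification—and the legitimacy of the normalization $L(\bolda_k)=1$, which is exactly what the hypothesis that the chosen hyperplane avoids $\bolda_k$ provides.
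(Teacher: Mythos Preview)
Your argument is correct and follows the same route as the paper: pick a linear functional vanishing on $F$ with value $1$ on $\bolda_k$, and read off the Euler operator via the correspondence from Notation~\ref{ntn-Euler-space}. The paper's proof is terser (it simply declares the facet uniqueness ``obvious''), whereas you spell out why a facet spans a hyperplane and hence pins down $L$ up to the imposed normalization; but the underlying idea is identical.
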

\begin{proof}
Choose for any such set $F$ a linear
functional $L\colon \QQ^d\to\QQ$ that
 vanishes on $F$ while $L(\bolda_k)=1$.
The corresponding Euler operator
$E_L-\beta_L$ has the desired properties, and if we define
numbers $a_{L,j}$ by
\[
E_L=:\sum_j a_{L,j}x_j\del_j
\]
then $a_{L,j}=L(\bolda_j)$. The uniqueness in the facet case is obvious.
\end{proof}



\section{Restricting the Fourier transform}

The Fourier transform $\calF(-)$ is a functor from the category of
$D$-modules on $V$ to the category of $D$-modules on the dual space
$V^*=\Hom_\CC(V,\CC)$. 
In this section we bound the $b$-function along a
coordinate hyperplane 
of the Fourier transform $\calF(M_A(\beta))$ of the
hypergeometric system. Note that this module is called $\check{
M}^\beta_A$ in \cite{ReicheltSevenheck-15}. 

The square of the Fourier transform is the involution induced by
$x\mapsto -x$, which has no effect on the analytic properties of the
modules we study. In particular, $b$-functions along coordinate
hyperplanes are unaffected by this
involution and we therefore consider $\calF^{-1}(M_A(\beta))$
without harm.

We start with introducing some notation.
\begin{ntn}
Let $\{y_j\}$ be the coordinates on $V^*$ such that
$\calF^{-1}(\del_j)=y_j$ on the level of differential operators.  We
let $\tilde D_A$ be the ring of $\CC$-linear differential operators on
$\tilde O_A:=\CC[y_0,\ldots,y_n]$, generated by $\{y_j,\delta_j\}_0^n$
where $\delta_j$ denotes $\frac{\del}{\del y_j}$. Then
$\calF^{-1}(x_j)=-\delta_j$. The subring
$\CC[\delta_1,\ldots,\delta_n]$ of $\tilde D_A$ is denoted $\tilde R_A$. The
isomorphism $(\tilde{-})\colon D_A\to\tilde D_A$ induced by
$\tilde\del_j:=y_j$ and $\tilde x_j=\delta_j$ sends $O_A$ to $\tilde
R_A$ and $R_A$ to $\tilde O_A$.

Thus, $\tilde I_A:=\calF^{-1}(I_A)$ is an ideal of $\tilde O_A$; the
advantage of considering $\calF^{-1}$ rather than $\calF$ is that
$\tilde I_A$ retains the shape of the generators of $I_A$ as
differences of monomials. For each $j$ set 
$\tilde\theta_j:=\calF^{-1}(\theta_j)=-\delta_j y_j$.
The $i$-th level $V$-filtration on $\tilde D_A$ along $y_t$ is 
spanned by $\delta^\alpha y^\beta$ with $\beta_t-\alpha_t\geq i$.
\end{ntn}

Before we get into the technical part, let us show by example an
outline of what is to happen. 
\begin{exa}
Let $A=\begin{pmatrix}-1&0&1\\1&1&1\end{pmatrix}$, a matrix whose
associated semigroup ring is a normal complete intersection. We will
estimate the $b$-function for restriction to the hyperplane $y_1=0$
(corresponding to the middle column) of $\calF^{-1}(M_A(\beta))$.
\begin{figure}[h]
\caption{Restriction of the Fourier transform to $y_1=0$.}
\begin{center}
\includegraphics[width=0.4\textwidth]{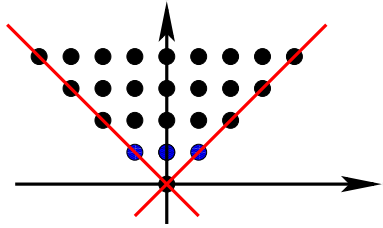}%
\end{center}
\end{figure}

The ideal $\tilde H_A(\beta):=\calF^{-1}(H_A(\beta))$ is generated by
\begin{gather}
-\tilde \theta_0+\tilde \theta_2-\beta_1, \qquad
\tilde \theta_0+\tilde \theta_1+\tilde \theta_2-\beta_2,
\qquad y_0y_2-y_1^2.
\end{gather}
Since $y_1\in (V^1\tilde D_A)$, $y_0y_2$ and hence also
$\tilde\theta_0\tilde\theta_2$ are in $(V^1\tilde D_A)+\tilde
H_A(\beta)$. 
The strategy of the example, and of the theorem in this
section, is to multiply the element $1\in\tilde D_A$ by suitable Euler
operators so that the result is a sum of a polynomial $p(\tilde
\theta_1)$ with an element of
$\CC[\tilde\theta_0,\tilde\theta_1,\tilde\theta_2]\cdot
\tilde\theta_0\tilde\theta_2$; this certifies $p(\tilde\theta_1)$ to
be in $\tilde H_A(\beta)+(V^1\tilde D_A)$.

In the case at hand, the relevant Euler operators are $2\tilde
\theta_0+\tilde\theta_1+\beta_1-\beta_2$ 
and $\tilde\theta_1+2\tilde \theta_2-\beta_1-\beta_2$.
Modulo $\tilde H_A(\beta)$ we can rewrite $(V^1\tilde D_A) \ni
4\delta_0\delta_2y_1^2\equiv 
4\tilde\theta_0\tilde\theta_2\equiv (-\tilde\theta_1-\beta_1+\beta_2)
(-\tilde\theta_1+\beta_1+\beta_2)$. It follows that $(\tilde
s+\beta_1-\beta_2)(\tilde s-\beta_1-\beta_2)$ is a multiple of the
$b$-function, where $\tilde s=\tilde\theta_1=-y_1\delta_1-1$.  
This Fourier twist in
the argument of the $b$-function occurs naturally throughout and we
will make our computations in this section in terms of $b(\tilde s)$.

The expressions $\tilde\theta_1+2\tilde\theta_2$ and
$2\tilde\theta_0+\tilde\theta_1$ that appear in the Euler operators we
used can be found systematically as follows. Let $d_1,d_2$ denote the
coordinates on the degree group $\ZZ^2$ corresponding to $E_1$ and
$E_2$; compare the discussion following Notation
\ref{ntn-Euler-space}. An element of $S_A$ has degree on the facet
$\RR_{\geq 0} \bolda_0$ if and only if the functional
$L_1(d_1,d_2)=d_1+d_2$ vanishes, and the Euler field that corresponds
to this functional in the spirit of Lemma~\ref{lem-Euler} is exactly
$\theta_1+2\theta_2-\beta_1-\beta_2$. The elements of $S_A$ with
degree on the facet $\RR_{\geq 0}\bolda_2$ are determined by the
vanishing of $L_2(d_1,d_2)=d_2-d_1$ and the Euler field corresponding
to this functional is exactly $2\theta_0+\theta_1+\beta_1-\beta_2$. It
is no coincidence that the union of the kernels of these two
functionals is exactly the set of quasi-degrees of $S_A/\del_1\cdot
S_A$. The point is that modulo $\tilde H_A(\beta)$ all monomials in
$\tilde S_A$ with degree in $\RR_+A$ are already in $(V^1\tilde
D_A)$. The task is then to deal with those with degree on the boundary
through multiplication with suitable expressions.

The picture shows in blue the elements of $A$, in black the other
elements of $\NN A$, and in red the quasi-degrees of $S_A/\del_1\cdot
S_A$. Note finally that $(\beta_2-\beta_1)\bolda_1$ and
$(\beta_1+\beta_2)\bolda_1$ are the intersections of $\RR\cdot
\bolda_1$ with $\qdeg_A(S_A)+\beta$.
\end{exa}

We now generalize the computation of the example to the general
case. 
\begin{cnv}
For the remainder of this section we consider restriction to the
hyperplane $y_0$ in order to save overhead (in terms of a further
index variable).
\end{cnv}

Consider the toric module $N=S_A/\del_0S_A$, and take a toric
filtration
\[
(N)\qquad\qquad\qquad\qquad\qquad\qquad\qquad\qquad\qquad 0=N_0\subsetneq N_1\subsetneq \ldots\subsetneq N_k=N\qquad\qquad\qquad\qquad\qquad\qquad\qquad
\]
with composition factors
\[
\bar N_\alpha:=N_\alpha/N_{\alpha-1},
\]
each isomorphic to some shifted face ring
$S_{F'_\alpha}(\boldb_\alpha)$, $F'_\alpha=\tau_\alpha\cap A$,
attached to a face $\tau_\alpha$ of $\RR_{\geq 0} A$.  (We will call
such $F'_\alpha$ also a face.) Lifting the
$N_\alpha$ to $S_A$ yields an increasing sequence of $A$-graded ideals
$J_\alpha\ni\del_0$ of $S_A$ with $N_\alpha=J_\alpha/\del_0\cdot S_A$.

Choose for each composition factor a facet $F_\alpha$ containing
$F'_\alpha$.  Note that none of the faces $F'_\alpha$ will contain
$\bolda_0$ (as $\del_0$ is zero on $N$ but not nilpotent on any
face ring of a face containing $\bolda_0$) and hence we can arrange
that the corresponding facets do not contain $\bolda_0$ either.

Lemma \ref{lem-Euler} produces for each $\bar N_\alpha$ a facet
$F_\alpha$ and corresponding functional $L_{F_\alpha}$ (which we
abbreviate to $L_\alpha$) that vanishes on
the facet and evaluates to $1$ on $\bolda_0$. The associated Euler
operator in $H_A(\beta)$ is $E_{F_\alpha}-\beta_{F_\alpha}$.
Since $L_\alpha$ is zero on
all $A$-columns in $F_\alpha$ and since $\bar N_\alpha$ is a shifted
quotient of $S_{F_\alpha}$, there is a unique value for $L_\alpha$ on
the $A$-degrees of all nonzero $A$-homogeneous elements of $\bar
N_\alpha$. We denote this value by $L_\alpha(\bar N_\alpha)$. Note,
however, that $L_\alpha(\bar N_\alpha)$ does very much depend on the
choice of the facet $F_\alpha$ even though the notation does not
remember this.

Now let $T_\alpha$ be the image in $\calF^{-1}(M_A(\beta))$ of
$\calF^{-1}(J_\alpha)$ under the map induced by $\tilde
O_A\to \tilde D_A\to \calF^{-1}(M_A(\beta))$.  Note
that the image of $T_0=y_0\tilde O_A$ in $\calF^{-1}(M_A(\beta))$ is in
$(V^1\tilde D_A)\cdot \bar 1 $, the bar denoting cosets in
$\calF^{-1}(M_A(\beta))$.

\begin{lem}
\label{lem-kappa}
In the context above, let $\kappa_\alpha$ be the constant
$L_\alpha(\bar N_\alpha)$.
Then in $\calF^{-1}(M_A(\beta))$, modulo the image of $(V^1\tilde D_A)$,
\[
(\tilde \theta_0+\kappa_\alpha-\beta_\alpha)\cdot
(V^0\tilde D_A)\cdot {T_\alpha} = (V^0\tilde D_A)\cdot (\tilde
\theta_0+\kappa_\alpha-\beta_\alpha)\cdot
{T_\alpha} \subseteq (V^0\tilde D_A)\cdot {T_{\alpha-1}}.
\]
\end{lem}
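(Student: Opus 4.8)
The plan is to unwind the definitions and reduce everything to the key identity \eqref{eq-deg} applied to the Euler operator $E_{F_\alpha}-\beta_{F_\alpha}$ from Lemma~\ref{lem-Euler}. First I would record the basic commutation fact: since $E_{F_\alpha}-\beta_{F_\alpha}$ lies in $H_A(\beta)$, its Fourier transform $\calF^{-1}(E_{F_\alpha}-\beta_{F_\alpha})$ annihilates $\bar 1$ in $\calF^{-1}(M_A(\beta))$; moreover this operator is $A$-homogeneous of $A$-degree $\boldzero$ along the rows it is built from, so it commutes with any element of $V^0\tilde D_A$ up to the scalar correction dictated by \eqref{eq-deg}. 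That already gives the left equality in the displayed formula, since $\tilde\theta_0+\kappa_\alpha-\beta_\alpha$ is precisely the ``$\theta_0$-component'' of that Euler operator (the coefficient of $\theta_0$ in $E_{F_\alpha}$ being $L_\alpha(\bolda_0)=1$), and the remaining terms $\sum_{j\neq 0} a_{L_\alpha,j}\tilde\theta_j$ all sit in $V^0\tilde D_A$ because $a_{L_\alpha,j}=L_\alpha(\bolda_j)$ and $V^0$ is closed under the $\tilde\theta_j$ with $j\neq 0$.

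Next I would establish the containment, which is the real content. Write the Euler operator $\calF^{-1}(E_{F_\alpha}-\beta_{F_\alpha})$ as $(\tilde\theta_0+\kappa_\alpha-\beta_\alpha)+Q$, where $Q=\sum_{j\neq 0} a_{L_\alpha,j}\tilde\theta_j-(\kappa_\alpha)$ (absorbing constants suitably); the point is that $Q\in V^0\tilde D_A$ and, crucially, $Q$ carries $\tilde\theta_j$ only for $j$ such that $\bolda_j\in F_\alpha$, since $a_{L_\alpha,j}=L_\alpha(\bolda_j)=0$ whenever $\bolda_j\notin F_\alpha$ is false — wait, rather $L_\alpha$ vanishes on $F_\alpha$, so $a_{L_\alpha,j}=0$ for $\bolda_j\in F_\alpha$ and $Q$ only involves $\tilde\theta_j$ for columns \emph{outside} $F_\alpha$. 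Since $E_{F_\alpha}-\beta_{F_\alpha}\in H_A(\beta)$, acting on any lift to $\tilde O_A$ of an element of $J_\alpha$ and reducing in $\calF^{-1}(M_A(\beta))$, the operator $(\tilde\theta_0+\kappa_\alpha-\beta_\alpha)$ acts as $-Q$. Now I claim $Q\cdot T_\alpha\subseteq T_{\alpha-1}$ modulo $V^1$: an element of $J_\alpha$ maps, modulo $J_{\alpha-1}$, to the composition factor $\bar N_\alpha\cong S_{F_\alpha'}(\boldb_\alpha)$, on which every $\del_j$ with $\bolda_j\notin F_\alpha'$ — in particular with $\bolda_j\notin F_\alpha$ — acts as zero; hence each $\tilde\theta_j=-\delta_j y_j$ appearing in $Q$ either kills the class in $\bar N_\alpha$ (pushing it into $T_{\alpha-1}$) or, if $j=0$, produces a factor $y_0$ landing in $V^1$. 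That is exactly the asserted inclusion $(V^0\tilde D_A)\cdot(\tilde\theta_0+\kappa_\alpha-\beta_\alpha)\cdot T_\alpha\subseteq (V^0\tilde D_A)\cdot T_{\alpha-1}$ modulo the image of $V^1\tilde D_A$.

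The one subtlety I would be careful about — and I expect this to be the main obstacle — is the bookkeeping of \emph{which} variables $Q$ involves versus which variables act as zero on $\bar N_\alpha$. The Euler operator is associated to the \emph{facet} $F_\alpha$, whereas the composition factor is a quotient of the face ring of the possibly smaller face $F_\alpha'\subseteq F_\alpha$. So $Q$ is supported on columns outside $F_\alpha$, and those are certainly outside $F_\alpha'$, so they act as zero (or as $\del_0\to y_0\in V^1$) on $\bar N_\alpha=N_\alpha/N_{\alpha-1}$; the inclusion $F_\alpha'\subseteq F_\alpha$ is what makes the argument go through, and it is exactly why we were allowed (two paragraphs before the lemma) to choose $F_\alpha$ to be any facet containing $F_\alpha'$ and avoiding $\bolda_0$. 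A secondary point is to make sure the scalar $\deg_{E}(\cdot)$ corrections produced by \eqref{eq-deg} when commuting the Euler operator past elements of $V^0\tilde D_A$ are harmless: they only shift $\beta_\alpha$ and $\kappa_\alpha$ by integers absorbed into the factor, and in fact the displayed equality is precisely the statement that such commutation is exact (no leftover), which follows because $V^0\tilde D_A$ is generated by $\tilde\theta_j$ and $\delta_j$, all of which are $A$-homogeneous. I would close by noting that the passage ``modulo the image of $V^1\tilde D_A$'' is used only for the $j=0$ term of $Q$ and for the observation (made just before the lemma) that $T_0=y_0\tilde O_A$ already maps into $V^1\cdot\bar 1$.
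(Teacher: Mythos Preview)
Your approach is the same as the paper's, but there is a genuine error in the bookkeeping of where $\kappa_\alpha$ enters, and as written the argument does not go through.

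You assert that because $E_{F_\alpha}-\beta_{F_\alpha}\in H_A(\beta)$, its Fourier transform annihilates any element $\tilde m\cdot\bar 1\in T_\alpha$, so that $(\tilde\theta_0+\kappa_\alpha-\beta_\alpha)$ acts as $-Q$ with $Q=\sum_{j\neq 0}a_{L_\alpha,j}\tilde\theta_j-\kappa_\alpha$. This is false: $\tilde E_\alpha-\beta_\alpha$ kills only $\bar 1$, not $\tilde m\bar 1$. By \eqref{eq-deg} one has $\tilde E_\alpha\tilde m=\tilde m(\tilde E_\alpha-\kappa_\alpha)$ for a monomial $\tilde m$ whose class in $\bar N_\alpha$ is nonzero, so in $\calF^{-1}(M_A(\beta))$
\[
(\tilde E_\alpha-\beta_\alpha)\cdot\tilde m\bar 1
=\tilde m(\tilde E_\alpha-\beta_\alpha-\kappa_\alpha)\bar 1
=-\kappa_\alpha\,\tilde m\bar 1,
\]
not zero. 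That commutation is precisely where $\kappa_\alpha$ comes from --- it is not a constituent of the Euler operator.

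The error then breaks the next step: your $Q$ carries the constant term $-\kappa_\alpha$, and the claim ``$Q\cdot T_\alpha\subseteq T_{\alpha-1}$ modulo $V^1$'' fails, since a nonzero constant times an element of $T_\alpha\smallsetminus T_{\alpha-1}$ stays outside $T_{\alpha-1}$. The fix is immediate once you relocate $\kappa_\alpha$: set $Q'=\sum_{j\neq 0}a_{L_\alpha,j}\tilde\theta_j$ (no constant), use the display above, and conclude
\[
(\tilde\theta_0+\kappa_\alpha-\beta_\alpha)\tilde m\bar 1
=-Q'\tilde m\bar 1
=\sum_{j\neq 0,\ \bolda_j\notin F_\alpha}a_{L_\alpha,j}\,\delta_j\,(y_j\tilde m)\bar 1.
\]
Now each $y_j\tilde m$ with $\bolda_j\notin F_\alpha\supseteq F_\alpha'$ lies in $T_{\alpha-1}$ (this is your correct observation about $\del_j$ annihilating $\bar N_\alpha$), and $\delta_j\in V^0\tilde D_A$. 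This is exactly the paper's computation.

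Two minor remarks. Your justification of the left equality is circuitous; the paper simply notes $[\tilde\theta_0,V^0\tilde D_A]\subseteq V^1\tilde D_A$, which one checks on monomials. Also, $V^0\tilde D_A$ is not generated by $\{\tilde\theta_j,\delta_j\}$ (for instance $y_1\in V^0\tilde D_A$), and the ``if $j=0$'' branch you mention never occurs since your own sum defining $Q$ excludes $j=0$.
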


\begin{proof}
Since the commutators $[\tilde \theta_0,(V^0\tilde D_A)]$ are in
$(V^1\tilde D_A)$, it
suffices to show that $(\tilde
\theta_0+\kappa_\alpha-\beta_\alpha)\cdot T_\alpha\subseteq
(V^0\tilde D_A)\cdot T_{\alpha-1}$ modulo $\calF^{-1}(H_A(\beta))$.

By definition, $\tilde
E_\alpha-\beta_\alpha:=\calF^{-1}(E_\alpha-\beta_\alpha)$ is zero in
$\calF^{-1}(M_A(\beta))$.  Take a monomial $\tilde m\in
\tilde O_A$ whose coset lies in $T_\alpha\setminus T_{\alpha-1}$.  By Equation
\eqref{eq-deg}, $\tilde E_\alpha\cdot \tilde m=\tilde m(\tilde
E_\alpha-\kappa_\alpha)$ since $\calF^{-1}(-)$ is a homomorphism.
  Now write $E_\alpha=\sum
a_{\alpha,j}\theta_j$; as before we have
$a_{\alpha,j}=L_\alpha(\bolda_j)$.

Since the coefficient of $\theta_0$ in $E_\alpha$ is $1$, it follows
that in $\calF^{-1}(M_A(\beta))$:
\begin{eqnarray*}
\tilde\theta_0  {\tilde m}&=&(-\tilde E_\alpha+\tilde\theta_0)
            {\tilde m}+\tilde E_\alpha  {\tilde
              m}\\ &=&\sum_{j\neq 0\atop L_\alpha(\bolda_j)\neq
              0}a_{\alpha,j}\delta_jy_j {\tilde
              m}+{\tilde m(\tilde
              E_\alpha-\kappa_\alpha)}\\ &=&\sum_{j\neq 0\atop
              \bolda_j\not\in F_\alpha}a_{\alpha,j}\delta_jy_j
            {\tilde m}+ {\tilde m}(\beta_\alpha-\kappa_\alpha).
\end{eqnarray*}
Recall that $F_\alpha$ contains $F'_\alpha$ and that $\bar N_\alpha$
is a $\ZZ A$-shift of $S_{F'_\alpha}=R_A/I_A^\tau$, whence each $y_j$ with
$\bolda_j\not\in F'$ annihilates $\calF^{-1}(\bar
N_\alpha)$. Therefore, each term $a_{\alpha,j}\delta_j(y_jm)$ in the
last sum of the display is in $(V^0D_A)T_{\alpha-1}$. It follows that
in $\calF^{-1}(M_A(\beta))$ we have 
$(\tilde\theta_0+\kappa_\alpha-\beta_\alpha){T_\alpha}\subseteq
(V^0\tilde D_A){T_{\alpha-1}}$ as claimed.
\end{proof}

\begin{thm}\label{thm-fourier}
For $t=0,\ldots,n$, the number $\eps\in\CC$ is a root of the
$b$-function $b(\tilde s)$ (with $\tilde s=\tilde\theta_t=-\delta_ty_t$) of
$\calF^{-1}(M_A(\beta))$ along $y_t=0$, only if $\eps\cdot \bolda_t$
is a point of intersection of the line $\CC\cdot \bolda_t$ with
the set $\beta-\qdeg_A(N)$, the quasi-degrees of the toric module
$N=S_A/\del_tS_A$ multiplied by $-1$ and shifted by $\beta$.

\end{thm}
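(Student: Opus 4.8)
The plan is to combine Lemma~\ref{lem-kappa} telescopically along the toric filtration $(N)$ and then to identify the resulting product of linear forms in $\tilde s$ with the intersection of $\CC\cdot\bolda_t$ with $\beta-\qdeg_A(N)$. First I would reduce, by the Convention, to the case $t=0$, so that the statement concerns restriction to $y_0=0$. The starting observation is that the image of $T_0=y_0\tilde O_A$ in $\calF^{-1}(M_A(\beta))$ lies in $(V^1\tilde D_A)\cdot\bar 1$, as already noted before Lemma~\ref{lem-kappa}. Since $T_k$ is the image of $\calF^{-1}(J_k)=\calF^{-1}(S_A)$, we have $(V^0\tilde D_A)\cdot T_k=\calF^{-1}(M_A(\beta))$, in particular $\bar 1\in (V^0\tilde D_A)\cdot T_k$.

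Next I would iterate Lemma~\ref{lem-kappa}: it gives, modulo the image of $(V^1\tilde D_A)$,
\[
(\tilde\theta_0+\kappa_\alpha-\beta_\alpha)\cdot(V^0\tilde D_A)\cdot T_\alpha\subseteq (V^0\tilde D_A)\cdot T_{\alpha-1},
\]
so applying the operators for $\alpha=k,k-1,\ldots,1$ in succession yields
\[
\Bigl(\prod_{\alpha=1}^{k}(\tilde\theta_0+\kappa_\alpha-\beta_\alpha)\Bigr)\cdot(V^0\tilde D_A)\cdot T_k\subseteq (V^0\tilde D_A)\cdot T_0\subseteq (V^1\tilde D_A)\cdot\bar 1
\]
inside $\calF^{-1}(M_A(\beta))$. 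Evaluating on $\bar 1\in(V^0\tilde D_A)\cdot T_k$ and recalling $\tilde\theta_0=\tilde s$, this shows that $\prod_{\alpha}(\tilde s+\kappa_\alpha-\beta_\alpha)$ lies in $\tilde H_A(\beta)+(V^1\tilde D_A)$, i.e.\ it is a multiple of the $b$-function $b(\tilde s)$ along $y_0=0$. Hence every root $\eps$ of $b(\tilde s)$ satisfies $\eps=\beta_\alpha-\kappa_\alpha$ for some $\alpha$. (One has to be a little careful that the $\tilde\theta_0$ appearing in the linear factors is the same element for which $b$ is defined; since $\tilde\theta_0=-\delta_0y_0=\tilde s$ by definition, and the commutators $[\tilde\theta_0,(V^0\tilde D_A)]$ land in $(V^1\tilde D_A)$, this is exactly the content of the first equality in Lemma~\ref{lem-kappa}.)

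It remains to translate the finite set $\{\beta_\alpha-\kappa_\alpha\}_\alpha$ into the geometric statement about $\CC\cdot\bolda_0\cap(\beta-\qdeg_A(N))$. Here is where I expect the main work. For each composition factor $\bar N_\alpha\cong S_{F'_\alpha}(\boldb_\alpha)$ and its chosen facet $F_\alpha\supseteq F'_\alpha$ with functional $L_\alpha$ (vanishing on $F_\alpha$, equal to $1$ on $\bolda_0$), the constant $\kappa_\alpha=L_\alpha(\bar N_\alpha)$ is the common value of $L_\alpha$ on $\deg_A(\bar N_\alpha)$, while $\beta_\alpha=\beta_{L_\alpha}=L_\alpha(\beta)$. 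Therefore $\beta_\alpha-\kappa_\alpha=L_\alpha(\beta-\gamma)$ for any $\gamma\in\deg_A(\bar N_\alpha)$, and since $L_\alpha$ is the functional dual to $\bolda_0$ along $F_\alpha$, the point $(\beta_\alpha-\kappa_\alpha)\bolda_0$ is precisely where the affine line $\beta-\deg_A(\bar N_\alpha)$ meets $\CC\cdot\bolda_0$ — equivalently where $\CC\cdot\bolda_0$ meets $\beta-(\text{the hyperplane translate spanned by }\deg_A(\bar N_\alpha))$. Taking the union over all $\alpha$ and passing to Zariski closures gives $\bigcup_\alpha\{(\beta_\alpha-\kappa_\alpha)\bolda_0\}\subseteq \CC\cdot\bolda_0\cap(\beta-\qdeg_A(N))$, because $\qdeg_A(N)$ is the closure of $\bigcup_\alpha\deg_A(\bar N_\alpha)$ (the composition factors exhaust the degrees of $N$ up to closure). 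Hence any root $\eps$ of $b(\tilde s)$ has $\eps\cdot\bolda_0\in\CC\cdot\bolda_0\cap(\beta-\qdeg_A(N))$, which is the assertion.

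The delicate point — and the one I would spend the most care on — is the bookkeeping in the last paragraph: showing that $L_\alpha$ being zero on all columns in the facet $F_\alpha$ forces $(\beta_\alpha-\kappa_\alpha)\bolda_0$ to be the \emph{unique} intersection point of $\CC\cdot\bolda_0$ with the affine subspace $\beta-\qdeg_A(\bar N_\alpha)$, using that $\bolda_0\notin F_\alpha$ so that $L_\alpha(\bolda_0)\neq 0$ and the intersection is a single point; and checking that the quasi-degrees of $N$ really are the union of those of the composition factors. Neither is hard, but the indices ($F'_\alpha$ versus $F_\alpha$, the shift $\boldb_\alpha$, the choice of $L_\alpha$) need to be threaded through correctly. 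The telescoping itself is immediate from Lemma~\ref{lem-kappa}; the only genuinely analytic input, that containment in $\tilde H_A(\beta)+(V^1\tilde D_A)$ certifies a multiple of the $b$-function, is just Definition~\ref{dfn-bfu-hyperplane}.
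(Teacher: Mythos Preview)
Your telescoping of Lemma~\ref{lem-kappa} is correct and matches the paper: it yields that $\prod_\alpha(\tilde\theta_0+\kappa_\alpha-\beta_\alpha)\in\tilde H_A(\beta)+(V^1\tilde D_A)$, so every root of $b(\tilde s)$ lies in the finite set $\{\beta_\alpha-\kappa_\alpha\}_\alpha$ attached to the chosen chain $(N)$ \emph{and} the chosen facets $F_\alpha\supseteq F'_\alpha$.

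The gap is in your geometric translation. You assert that $(\beta_\alpha-\kappa_\alpha)\bolda_0$ lies in $\beta-\qdeg_A(\bar N_\alpha)$. What you actually prove is only that $L_\alpha\bigl((\beta_\alpha-\kappa_\alpha)\bolda_0\bigr)=\beta_\alpha-\kappa_\alpha$, i.e.\ that this point lies on the affine \emph{hyperplane} $\beta-\{L_\alpha=\kappa_\alpha\}$. That hyperplane contains $\beta-\qdeg_A(\bar N_\alpha)$, but when the face $F'_\alpha$ is strictly smaller than the chosen facet $F_\alpha$ the set $\qdeg_A(\bar N_\alpha)=-\boldb_\alpha+\CC F'_\alpha$ has codimension $>1$, and the line $\CC\cdot\bolda_0$ may meet the hyperplane at a point that is not in $\beta-\qdeg_A(\bar N_\alpha)$ at all. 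Your phrase ``the hyperplane translate spanned by $\deg_A(\bar N_\alpha)$'' conflates the two sets; the affine span of $\deg_A(\bar N_\alpha)$ need not be a hyperplane. So from a \emph{single} choice of facets you cannot conclude that the root set sits in $\beta-\qdeg_A(N)$.

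The paper closes this gap with one extra move you are missing: the product $\prod_\alpha(\tilde\theta_0+\kappa_\alpha-\beta_\alpha)$ depends on the choice of facets $F_\alpha\supseteq F'_\alpha$, but the $b$-function does not. Hence the roots lie in the \emph{intersection} over all such choices of the sets $\{\beta_\alpha-\kappa_\alpha\}_\alpha$. For a fixed $\alpha$, as $F_\alpha$ ranges over all facets containing $F'_\alpha$, the hyperplanes $\{L_\alpha=\kappa_\alpha\}$ cut out exactly the affine span $\CC F'_\alpha$ shifted to $\qdeg_A(\bar N_\alpha)$; intersecting the corresponding arrangements over all choices then lands you in $\bigcup_\alpha\bigl(\beta-\qdeg_A(\bar N_\alpha)\bigr)=\beta-\qdeg_A(N)$, as desired. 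Without this variation-and-intersection step your bound is only $\CC\cdot\bolda_0\cap\bigl(\beta-\text{(a hyperplane arrangement containing }\qdeg_A(N))\bigr)$, which is in general strictly weaker than the theorem.
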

\begin{proof}
Without loss of generality we shall suppose that $t=0$ by way of re-indexing.

We will show that a divisor of  
$\prod_\alpha(\tilde \theta_0+\kappa_\alpha-\beta_\alpha)$ is inside
$H_A(\beta)+(V^1\tilde D_A)$, in notation from the
previous lemma.

Indeed, it follows from Lemma \ref{lem-kappa} that
$\prod_\alpha(\tilde\theta_0+\kappa_\alpha-\beta_\alpha)$ multiplies
$\bar 1\in \calF^{-1}(M_A(\beta))$ into $(V^0\tilde D_A)\cdot y_0\cdot \bar
1\subseteq (V^1\tilde D_A)\cdot \bar 1$. Hence the root set of the
$b$-function $b(\tilde\theta_0)$ in question is a subset of
$\{\beta_\alpha-\kappa_\alpha\}$, $\alpha$ running through the indices
of the chosen composition series of $N$. This set is determined by the
composition series $(N)$ and the choices of the facets $F_\alpha$ for
each $N_\alpha$.  Varying over all choices of facets $\{F_\alpha\}$
for a given chain $(N)$, the root set of $b(\tilde\theta_0)$ is in the
intersection $\rho_N$ of all possible sets
$\{\beta_\alpha-\kappa_\alpha\}_{\alpha\in(N)}$.

Since $L_\alpha(\bolda_0)=1$, the point
$(\beta_\alpha-\kappa_\alpha)\cdot \bolda_0$ is the intersection of
the hyperplane $L_\alpha=\beta_\alpha-\kappa_\alpha$ with the line
$\CC\cdot\bolda_0$.  Thus, $\rho_N$ is inside the intersection of
$\CC\cdot \bolda_0$ with all arrangements
$\Var\prod_\alpha(L_\alpha-\beta_\alpha+\kappa_\alpha)$.  The
intersection of the arrangements
$\Var\prod_\alpha(L_\alpha-\beta_\alpha+\kappa_\alpha)$ is the union
of the quasi-degrees of all $\bar N_\alpha$ of the composition chain $(N)$,
multiplied by $-1$ and shifted by $-\beta_\alpha$. As $N$ is finitely generated,
$\qdeg_A(N)=\bigcup_\alpha \qdeg_A(\bar N_\alpha)$. Hence the root set
of $b(\tilde \theta_0)$ is contained in the intersection
$-\qdeg_A(S_A/\del_0S_A)+\beta$ with $\CC\cdot\bolda_0$.

\comment{
We now prove that the geometric description captures all roots up to radical.
Suppose the roots are $\eps_1,\ldots,\eps_m$ with some possible
multiplicities, bounded by (say) $t$. Then
$\prod(\theta_0-\eps_i)^t\cdot 1\in I+(V^1D_A)\cdot 1$.
Suppose a candidate root
$\eps_\alpha:=\kappa_\alpha-\beta_\alpha$ is not
a root of the $b$-function. Then choose a nonzero monomial in $O_A$ in
$T_\alpha\smallsetminus T_{\alpha-1}$. Since $m$ does not involve
$x_0$, $\prod(\theta_0-\eps_i)^t\cdot m\in I+(V^1D_A)\cdot 1$. On the
other hand, we showed that $(\theta_0-\eps_\alpha)\cdot m\in
I+(V^0D_A)T_{\alpha-1}$. Since $\eps_\alpha$ is not any of the $\eps_i$,
$(\theta_0-\eps_i)\cdot m $ is, modulo $I+(V^0D_A)T_{\alpha-1}$, a
nonzero constant multiple  $um$ of $m$. Hence, the same is true for
$\prod(\theta_0-\eps_i)^t\cdot m$.
Combining $\prod(\theta_0-\eps_i)^t\cdot m\in I+(V^1D_A)\cdot 1$ with
$\prod(\theta_0-\eps_i)^t\cdot m=um \mod I+(V^0D_A)T_{\alpha-1}$ we find
$m\in I+(V^1D_A)\cdot 1 + (V^0D_A)\cdot T_{\alpha-1}$. We write this out
in left-normal form in $D$ (where all monomials have all $\del_j$ to the left
of all $x_j$ for all $j$):
\[
m=\sum\del^{\boldp_i} x^{\boldp_i'}\Box_{u_i} + \sum_{q_i'>q_i} \del_{>0}^{\boldq_i}
x_{>0}^{\boldq_i'}\del_0^{q_i} x_0^{q_i'}+\sum_{r_i'\geq r_i} \del_{>0}^{\boldr_i}
x_{>0}^{\boldr_i'}\del_0^{r_i} x_0^{r_i'}m_i
\]
where $\Box_{u_i}$ is an $A$-graded binomial in the $x$-variables,
bold symbols are vectors of natural numbers, $m_i$ is in
$T_{\alpha-1}$, and $p,p',q,q',r,r'\in\NN$.
Since left-normalized monomials are a basis for $D$, the term ``$m$''
must show up on the right. If it shows in one of the toric terms, then
the toric term in question is of the form $m-m'$ where $m$ and $m'$
are identical in $O_A$. Bring this term to the left hand side,
replace $m$ by $m'$ and consider anew. We may hence assume that $m$
does not show in the toric contributions. It can further not occur in
the contributions from $(V^1D_A)\cdot 1$ since any such monomial has an
$x_0$-factor, and $m$ does not (as it is nonzero in
$O_A/x_0O_A$).  Thus, $m$ must be in $(V^0D_A)S_{\alpha-1}$. But
then $m=\del_{>0}^{\boldr_i} x_{>0}^{\boldr_i'}\del_0^{r_i}
x_0^{r_i'}m_i$ can only be if $\boldr_i=r_i=r_i'=0$, which entails
$m=x^{\boldr'_i}m_i$ with $m_i\in T_{\alpha-1}$. This contradicts the
choice  $m\in T_\alpha\setminus T_{\alpha-1}$. It follows by contradiction
that every $\eps_\alpha$ is indeed a root.
}
\end{proof}


\begin{rmk}
The quantity $\tilde\theta_t$ is the more natural argument for the
$b$-function here. Note that the roots of $b(y_t\delta_t)$ are those
of $b(\tilde\theta_t)$ shifted up by $1$ and then multiplied by
$-1$. 
\end{rmk}

\begin{exa}
Let
$A=(\bolda_0,\bolda_1,\bolda_2)=\begin{pmatrix}-1&0&3\\1&1&1\end{pmatrix}$
and $\beta={\beta_1\choose \beta_2}$. The ring $S_A$ is a complete intersection
but not normal. 

Consider restriction to $y_1=0$ (the middle column).  Then
$N=S_A/\del_1\cdot S_A$ has a toric filtration involving 4 steps,
given by the ideals $0\subsetneq \del_0^3\cdot N\subsetneq \del_0^2\cdot
N\subsetneq \del_0\cdot N\subsetneq N$. The 
corresponding $A$-graded composition factors are
$S_A(-3\cdot\bolda_0)/(\del_1,\del_2)S_A$ and $\{
S_A(-\alpha\cdot\bolda_0)/(\del_0,\del_1)S_A\}_{\alpha=0}^2$.
The $b$-function $b(\tilde\theta_1)$ for the inverse
Fourier transform is $(\tilde\theta_1-\beta_1-\beta_2)
\prod_{\alpha=0}^2(\tilde\theta_1-\frac{3\beta_2-\beta_1-4\alpha}{3})$.

Explicitly, $y_1^4-y_0^3y_2\in \tilde H_A(\beta)$ gives $(V^1\tilde
D_A)\ni\delta_0^3\delta_2y_0^3y_2=\tilde\theta_2\tilde\theta_0(\tilde\theta_0-1)(\tilde\theta_0-2)$
which modulo $\tilde H_A(\beta)$ equals $(-1)^4(\tilde\theta_1-\beta_1-\beta_2)
\prod_{\alpha=0}^2(\tilde\theta_1-\frac{3\beta_2-\beta_1-4\alpha}{3})$. 
The relevant Euler operators are $\theta_1+4\theta_2-\beta_1-\beta_2$
and $3\theta_1+4\theta_0-3\beta_2+\beta_1$.


\begin{figure}[h]
\caption{Restriction of the Fourier transform to $y_1=0$.}
\begin{center}
\includegraphics[width=0.56\textwidth]{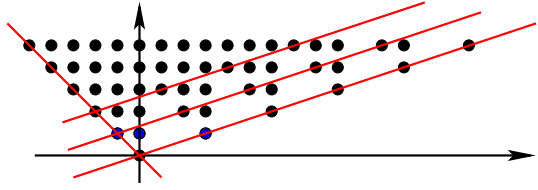}%
\end{center}
\end{figure}

The picture shows in blue the columns of $A$, in black the other
elements of $\NN A$, in red the quasi-degrees of $N=S_A/\del_1\cdot
S_A$. The roots of $b(\delta_1y_1)$ (which are opposite to the roots
of $b(\tilde \theta_1)$) are the intersections of the line
$\CC\cdot{0\choose 1}$ with the shift of the red lines
by $-\beta$.

In this example, each composition factor
corresponds to facet and to a component of the quasi-degrees of $N$.
One checks that each composition chain must have these four lines as
quasi-degrees. Note, however, that composition chains are far from
unique and in general such correspondence will not exist. 
\end{exa}

\begin{rmk}
The $b$-function for $\calF^{-1}(M_A(\beta))$ along a coordinate
hyperplane  is generally
not reduced, and its degree may be lower than the length of the
shortest toric filtration for $N=S_A/\del_t\cdot S_A$ would suggest.
(Not every component of $\beta-\qdeg_A(N)$ needs to meet the
line $\CC\cdot \bolda_t$).
\end{rmk}

\begin{cor}\label{cor-fourier-roots-nice}
The roots of the $b$-function $b(\delta_ty_t)$ of $\calF^{-1}(M_A(\beta))$
along $y_t=0$ are in the field
$\QQ(\beta)$.

Consider $\calF^{-1}(M_A(0))$; then:
\begin{asparaenum}
\item the roots of the $b$-function $b(\tilde\theta_t)$ are
  non-negative rationals;
\item if $S_A$ is normal, all roots are in the interval $[0,1)$;
\item if the interior ideal of $S_A$ is contained in $\del_t\cdot S_A$
then zero is the only root.
\end{asparaenum}
\end{cor}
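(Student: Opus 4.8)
The plan is to deduce all four statements from Theorem~\ref{thm-fourier}, or, what amounts to the same, from the divisibility $b(\tilde s)\mid\prod_\alpha(\tilde s+\kappa_\alpha-\beta_\alpha)$ isolated in its proof: here $\tilde s=\tilde\theta_t$, and $\kappa_\alpha=L_\alpha(\bar N_\alpha)$, $\beta_\alpha=\beta_{F_\alpha}$ range over a toric chain $(N)$ of $N=S_A/\del_tS_A$ together with a choice of facets $F_\alpha\supseteq F'_\alpha$, none containing $\bolda_t$. I will also use that, by the Remark, replacing $\tilde\theta_t$ by $\delta_ty_t=-\tilde\theta_t$ only changes the sign of the roots.

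First I would settle the field of definition. The functional $L_\alpha$ is the rational scalar multiple $\ell_\alpha/\ell_\alpha(\bolda_t)$ of the primitive integral functional $\ell_\alpha\ge 0$ cutting out the facet $F_\alpha$, and $\ell_\alpha(\bolda_t)\ge 1$ since $\bolda_t$ lies in $\RR_{\ge 0}A$ but off $F_\alpha$. As $\bar N_\alpha$ is a subquotient of $N\subseteq S_A$, its $A$-degrees lie in $\NN A\subseteq\ZZ^d$ and $L_\alpha$ is constant on them, so $\kappa_\alpha=\ell_\alpha(\boldb)/\ell_\alpha(\bolda_t)\in\QQ_{\ge 0}$ for any $\boldb\in\deg_A(\bar N_\alpha)$. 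Writing $E_{F_\alpha}=\sum_ic_iE_i$ with $(c_i)$ the rational coordinate vector of $L_\alpha$ gives $\beta_\alpha=\sum_ic_i\beta_i\in\QQ\beta_1+\dots+\QQ\beta_d$. Hence every candidate root $\beta_\alpha-\kappa_\alpha$ lies in $\QQ(\beta)$, which is the first assertion; and specialising to $\beta=0$ leaves candidate roots of the form $\pm\kappa_\alpha$ with $\kappa_\alpha\in\QQ_{\ge 0}$, which gives~(1).

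For~(2) I would use that normality of $S_A$ gives $\NN A=\RR_{\ge 0}A\cap\ZZ^d$, whence $\deg_A(N)=\bigl(\RR_{\ge 0}A\setminus(\bolda_t+\RR_{\ge 0}A)\bigr)\cap\ZZ^d$; a lattice point $\boldb$ of the cone misses $\bolda_t+\RR_{\ge 0}A$ exactly when $\ell_F(\boldb)<\ell_F(\bolda_t)$ for some facet $F$, necessarily with $\bolda_t\notin F$, so that $0\le\ell_F(\boldb)\le\ell_F(\bolda_t)-1$. Thus $\deg_A(N)$, hence $\qdeg_A(N)$, is contained in the finitely many rational hyperplanes $\{\ell_F=k\}$ with $\bolda_t\notin F$ and $0\le k<\ell_F(\bolda_t)$, and a root $\eps$ of the $b$-function at $\beta=0$ satisfies $\eps\cdot\ell_F(\bolda_t)=k$ for one such pair, i.e.\ $\eps=k/\ell_F(\bolda_t)\in[0,1)$.

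Finally, for~(3): if the interior ideal of $S_A$ lies in $\del_tS_A$ then $N$ has no lattice points in the interior of $\RR_{\ge 0}A$, so $\deg_A(N)$, and with it $\qdeg_A(N)$, sits inside the union $\bigcup_F\CC F$ of the linear spans of the facets. Each component of $\qdeg_A(N)$ is an affine-linear space contained in a single $\CC F$ and has the shape $-\boldb_\alpha+\CC F'_\alpha$ with $\bolda_t\notin\CC F'_\alpha$, so it meets $\CC\bolda_t$ in at most one point $c\bolda_t$; when $\bolda_t\notin F$ this point is the origin because $\bolda_t\notin\CC F$, and when $\bolda_t\in F$ one notes that $\deg_A(N)\cap F$ is the degree set of the analogous module $S_F/\del_tS_F$ over the facet semigroup $\NN F=\NN A\cap F$ and argues by induction on $d$. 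The step I expect to demand the most care is exactly this last piece of bookkeeping — matching facets to composition factors, equivalently controlling which components of $\qdeg_A(N)$ can meet the line $\CC\bolda_t$, so that the $\kappa_\alpha$ land in the asserted intervals — and, in~(3), verifying that the interior-ideal hypothesis passes to the facet $F$ in the form the induction needs.
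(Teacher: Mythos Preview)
Your plan follows the paper's proof closely: all four assertions are read off from Theorem~\ref{thm-fourier} via the rationality and non-negativity of the facet functionals $L_\alpha$ on $\NN A$ (first claim and item~(1)), the bound $L_\alpha(\boldb)<L_\alpha(\bolda_t)$ coming from normality (item~(2)), and the containment of $\deg_A(N)$ in the boundary of the cone (item~(3)). For the first three points your argument and the paper's are essentially identical, yours being somewhat more explicit.

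The one place you diverge is item~(3). The paper disposes of it in a single sentence: under the hypothesis, $\deg_A(N)$ and hence $\qdeg_A(N)$ lie in the supporting hyperplanes of $\RR_{\ge0}A$, ``which meet $\CC\cdot\bolda_t$ at the origin.'' It does not split into cases according to whether $\bolda_t$ lies on a facet, and in particular does not set up the induction on $d$ you propose. Your caution about the case $\bolda_t\in F$ is reasonable---the sentence just quoted is literally true only when $\bolda_t$ is interior to the cone---but the induction you sketch is genuinely extra, and you yourself flag that the passage of the interior-ideal hypothesis to the facet ring $S_F$ has not been verified. If you pursue that route you must make that descent precise; alternatively, you can try to argue directly (closer to the spirit of the paper) that each irreducible component $\gamma_\alpha+\CC F'_\alpha$ of $\qdeg_A(N)$, with $\bolda_t\notin\CC F'_\alpha$, either misses $\CC\bolda_t$ or meets it only at the origin, using that the whole translate lies in a single facet hyperplane.
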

\begin{proof}
The first claim is a consequence of the intersection property in
Theorem \ref{thm-fourier}: the defining equations for the quasi-degrees
are rational.

Let $N=S_A/\del_tS_A$. For items 1.-3., we need to study the
intersection of $\qdeg_A(N)$ with $\CC\cdot\bolda_t$, since $\beta=0$
and $\delta_ty_t=-\tilde\theta_t$. 
The quasi-degrees of $N$ are covered by hyperplanes of
the sort $L_\alpha=\varepsilon$ where $L_\alpha$ is a rational
supporting functional of the facet $F_\alpha$. In particular, we
can arrange $L_\alpha$ to be zero on $F_\alpha$, positive on the rest of $A$,
and $L_\alpha(\bolda_t)=1$. As $\deg_A(N)\subseteq \deg_A(S_A)$,
$\eps\geq 0$. Hence $\Var(L_\alpha-\eps)$ meets $\CC\cdot \bolda_t$ in
the non-negative rational multiple $\eps\bolda_t$ of $\bolda_t$.  If
$S_A$ is normal, $\deg_A(S_A/\del_AS_A)$ is covered by hyperplanes
$\Var(L_\alpha-\eps)$ that do not meet the cone $\bolda_t+\RR_{\geq
  0}A$. These are precisely the ones for which $\eps<1$.

If $\del_t\cdot S_A$ contains the interior ideal then
$\deg_A(N)$, and hence $\qdeg_A(N)$, is
inside the supporting hyperplanes of the cone, which meet $\CC
\cdot \bolda_t$ at the origin.
\end{proof}

\begin{rmk}
One special case in which case 3 of Corollary
\ref{cor-fourier-roots-nice} applies is when $S_A$ is Gorenstein
and where further $\del_t$ generates the canonical module. The matrix
$A=(\bolda_0,\ldots,\bolda_3)=
\begin{pmatrix}1&1&1&1\\0&1&3&0\\0&0&0&1\end{pmatrix}$,
with the interior ideal being generated by $\del_1\del_3$, provides an
example that case (3) can occur in a Gorenstein situation without the
boundary of $\NN A$ being saturated. See \cite{SchulzeWalther-Newton}
for a discussion on Cohen--Maculayness of face rings of
Cohen--Macaulay semigroup rings.
\end{rmk}

\section{$b$-functions for the hypergeometric system}

\subsection{Restriction along a hyperplane}

We are here interested in the $b$-function for the hypergeometric
module $M_A(\beta)$ along the hyperplane $x_t=0$. 
As in the previous section, apart from examples, we actually carry out
all computations for $t=0$, in order to have as few variables around
as possible. On the other hand, the natural argument for expressing
the $b$-function will be $s=x_0\del_0$.

\begin{ntn}
With
$A=(\bolda_0,\ldots,\bolda_n)$ and distinguished index $0$, we denote 
$A':=(\bolda_1,\ldots,\bolda_n)$. Via $\NN A'\subseteq \NN A$
we consider $S_{A'}$ as a subring of $S_{A}$. 

For $k\in \NN$ let $\bar J_{A,0;k}\subseteq S_{A'}$ be the vector
space spanned by the monomials $\del^\boldu$ with $u_0=0$ (so that
$\del^\boldu\in S_{A'}$) that satisfy $\del_0^k\cdot \del^\boldu\in
S_{A'}$. We denote $J_{A,0;k}\subseteq R_{A'}$ the preimage of $\bar
J_{A,0;k}$ under the natural surjection $R_{A'}\onto S_{A'}$.
  Put $J_{A,0}=\sum_{k\geq 1} J_{A,0;k}$ and $\bar
J_{A,0}=J_{A,0}/I_{A'}\subseteq S_{A'}$ .
\end{ntn}
Each $\bar J_{A,0;k}$ is a monomial ideal of $S_{A'}$ since
$\del_0^k(\del^\boldv\del^\boldu)=\del^\boldv(\del_0^k\del^\boldu)$. 
Note, however, that $\bar J_{A,0;k}$ need not
be contained in $\bar J_{A,0;k+1}$.
If $\bolda_0\in\RR_{\geq 0}A'$ then some power of $\del_0$ is in $S_{A'}$ and
so $\bar J_{A,0}=S_{A'}$. 

\begin{dfn}
For $\bolda_0\in\RR^d$ outside $\RR_{\geq 0}A'$, a point
$\bolda\in\RR_{\geq 0}A'$ is \emph{$\bolda_0$-visible} if $\bolda +
\lambda\cdot \bolda_0$, $0<\lambda\ll 1$ is outside $\RR_{\geq 0}A'$.
(The idea behind the choice of language is that the observer stands at
the point of projective space given by the line $\RR\bolda_0$.)

By abuse of notation, we say that $\del^\bolda$ is $\bolda_0$-visible
if $\bolda$ is.
\end{dfn}

\begin{lem}\label{lem-bolda}
Assume that $\bolda_0$ is not in the cone $\RR_{\geq 0}A'$. Then the
radical of $J_{A,0}$ is generated by the $\bolda_0$-invisible elements
of $S_{A'}$, and in
consequence the quasi-degrees of $S_{A'}/J_{A,0}$ are a union of
shifted face spans where each face is in its entirety visible from
$\bolda_0$.
\end{lem}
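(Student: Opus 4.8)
\textbf{Proof plan for Lemma~\ref{lem-bolda}.}
The plan is to analyze the monomial ideal $\bar J_{A,0}=\bigcup_{k\ge 1}\bar J_{A,0;k}$ of $S_{A'}$ directly in terms of lattice points, and then translate the monomial description into a statement about quasi-degrees via the face structure of $\RR_{\ge 0}A'$. First I would fix notation: a monomial $\del^\boldu\in S_{A'}$ corresponds to its $A'$-degree $\bolda=A'\boldu\in\NN A'$, and $\del^\boldu$ lies in $\bar J_{A,0;k}$ precisely when $\bolda+k\bolda_0$ again lies in $\NN A$ (so that $\del_0^k\del^\boldu$ is a monomial of $S_A$ that happens to lie in $S_{A'}$, i.e. has $\del_0$-exponent zero after reduction — here one must be slightly careful and argue that $\del_0^k\del^\boldu\in S_{A'}$ forces $\bolda+k\bolda_0\in\NN A'$, not merely $\NN A$, which uses that $\bolda_0\notin\RR_{\ge 0}A'$ so the only way to write $\bolda+k\bolda_0$ with nonnegative coefficients while omitting the $0$-th column is for the lattice point to already sit in $\NN A'$). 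Consequently $\del^\bolda\in\bar J_{A,0}$ iff there exists $k\ge 1$ with $\bolda+k\bolda_0\in\NN A'$, which is an arithmetic shadow of the geometric condition ``$\bolda$ is $\bolda_0$-invisible'': if $\bolda$ is invisible then $\bolda+\lambda\bolda_0\in\RR_{\ge 0}A'$ for small $\lambda>0$, hence by convexity for all $\lambda$ up to where the ray exits the cone, and a lattice-point/Gordan-type argument produces an integer $k\ge 1$ with $\bolda+k\bolda_0\in\NN A'$ (possibly after first replacing $\bolda$ by a lattice multiple, which is harmless for the radical).

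Next I would prove the radical statement. One inclusion is formal: every $\bolda_0$-invisible monomial $\del^\bolda$ satisfies, by the above, $\del^\bolda\cdot(\text{something})\in\bar J_{A,0;k}$, and in fact I claim a suitable power $\del^{N\bolda}$ itself lies in $\bar J_{A,0}$ — indeed $N\bolda+k\bolda_0\in\NN A'$ for appropriate $N,k$ by the same convexity argument scaled up — so invisible monomials lie in $\sqrt{\bar J_{A,0}}$. For the reverse inclusion, note $\bar J_{A,0}$ is a monomial ideal, so $\sqrt{\bar J_{A,0}}$ is generated by those monomials $\del^\bolda$ some power of which lies in $\bar J_{A,0}$; if $\del^{N\bolda}\in\bar J_{A,0;k}$ then $N\bolda+k\bolda_0\in\NN A'\subseteq\RR_{\ge 0}A'$, so by convexity $\bolda+\tfrac{k}{N}\bolda_0\in\RR_{\ge 0}A'$, and scaling the parameter down shows $\bolda+\lambda\bolda_0\in\RR_{\ge 0}A'$ for all small $\lambda>0$ — i.e. $\bolda$ is invisible. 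Hence $\sqrt{\bar J_{A,0}}$ is exactly the span of the invisible monomials, as claimed.

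Finally I would deduce the quasi-degree statement. Since $S_{A'}$ is $A$-graded (via the inclusion $\NN A'\subseteq\NN A$) and $\bar J_{A,0}$ is a monomial, hence $A$-graded, ideal, the quotient $S_{A'}/J_{A,0}$ is $A$-graded with $\deg_A(S_{A'}/J_{A,0})$ equal to the set of $A'$-degrees of invisible monomials modulo those already in $\bar J_{A,0}$; passing to the Zariski closure, $\qdeg_A(S_{A'}/J_{A,0})=\qdeg_A(S_{A'}/\sqrt{\bar J_{A,0}})$ because an ideal and its radical have the same quasi-degrees (they have the same underlying reduced scheme). The radical is the face ideal-type object cut out by the invisible monomials, and its associated primes among the $I_{A'}^\tau$ are exactly those faces $\tau$ of $\RR_{\ge 0}A'$ all of whose lattice points are invisible — equivalently, the relatively open face $\tau$ is entirely visible from $\bolda_0$, since visibility is constant on the relative interior of a face and invisibility of an interior point propagates to the whole face by convexity. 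Therefore $\qdeg_A(S_{A'}/J_{A,0})$ is a finite union of (shifts of) such face spans, each face visible in its entirety from $\bolda_0$. The main obstacle I anticipate is the careful lattice-point bookkeeping in the first paragraph — passing between the geometric ``$\bolda+\lambda\bolda_0\in\RR_{\ge 0}A'$ for small $\lambda$'' and the existence of an honest positive integer $k$ with $\bolda+k\bolda_0\in\NN A'$ (not just in $\NN A$), and checking that $\del_0^k\del^\boldu$ landing in the subring $S_{A'}$ really does pin down the degree to $\NN A'$; everything downstream is a formal consequence of $\bar J_{A,0}$ being a monomial ideal together with standard facts about quasi-degrees of graded modules.
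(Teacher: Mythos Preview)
Your approach is essentially the paper's: show that visible monomials have no power in $\bar J_{A,0}$, that some power of every invisible monomial does lie in $\bar J_{A,0}$, and then pass to quasi-degrees via the radical. You are right that the second step is the crux, but the ``Gordan-type argument'' you gesture at does not suffice as stated. The paper first disposes of the case where $\ZZ A/\ZZ A'$ has positive rank (then $\bar J_{A,0}=0$ and everything is visible). In the finite-index case, from $\bolda+(p/q)\bolda_0\in\RR_{\ge0}A'$ one gets $q\bolda+p\bolda_0\in\ZZ A\cap\RR_{\ge0}A'$, and finiteness of $\ZZ A/\ZZ A'$ allows a further multiple to land in $\ZZ A'\cap\RR_{\ge0}A'$. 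The delicate point is descending from $\ZZ A'\cap\RR_{\ge0}A'$ to $\NN A'$ when $S_{A'}$ is not normal: the paper passes to the smallest face $\tau$ of $\RR_{\ge0}A'$ containing the point $\boldb$ (so $\boldb$ becomes a relative interior point of $\tau$) and then uses that $\CC[\tau\cap\ZZ A']$ is a finite $\CC[\tau\cap\NN A']$-module to force a power of $\del^{\boldb}$ into $S_{A'}$. A naive Gordan argument misses this, because if $\boldb$ lies on a proper face no multiple of it ever becomes ``deep'' in the big cone. Incidentally, your worry that $\del_0^k\del^\boldu\in S_{A'}$ might only give degree in $\NN A$ is unfounded: $S_{A'}=\CC[\NN A']$ by definition, so membership already forces the degree into $\NN A'$.

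Your quasi-degree paragraph contains an actual error: the claim $\qdeg_A(S_{A'}/J_{A,0})=\qdeg_A(S_{A'}/\sqrt{\bar J_{A,0}})$ is false in general (take $S_{A'}=\CC[\del]$ with $\deg\del=1$ and $J=(\del^2)$: the quasi-degrees are $\{0,1\}$ versus $\{0\}$). Having the same reduced scheme controls supports, not degree sets. What is true, and what the paper uses, is that every toric composition factor of $S_{A'}/J_{A,0}$ is a module over $S_{A'}/\sqrt{J_{A,0}}$, hence a \emph{shift} of some $S_\tau$ with $I_{A'}^\tau\supseteq\sqrt{J_{A,0}}$, i.e.\ with $\tau$ entirely visible; so $\qdeg_A(S_{A'}/J_{A,0})$ is contained in a union of shifted spans $\CC\tau$. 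Your final sentence already inserts ``(shifts of)'', so the conclusion is right, but the justification you wrote for it is not. (Also, two sentences earlier you wrote ``all of whose lattice points are invisible'' where you clearly mean visible, and the claim that invisibility of an interior point propagates to the whole face is backwards---what holds, and what you need, is that invisibility of \emph{any} point of $\tau$ forces invisibility of every relative interior point.)
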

\begin{proof}
If $\ZZ A/\ZZ A'$ has positive rank then all points of $\NN A$ are
$\bolda_0$-visible while $J_{A,0}$ is clearly zero, so that in this
case there is nothing to prove. We therefore assume that $\ZZ A/\ZZ
A'$ is finite.

It is immediate that $\bolda$ is $\bolda_0$-visible if and only if any
positive integer multiple of it is. This implies that no power of an
$\bolda_0$-visible element $\del^\bolda$ of $S_{A'}$ can be in the radical of
$J_{A,0}$ since $\del^{m\cdot\bolda + k\bolda_0}$ can't have its
degree in the cone of $A'$.

For the converse, suppose $\bolda$ is not $\bolda_0$-visible, so that
there are positive integers $p<q$ with
$\bolda+(p/q)\cdot\bolda_0\in\RR_{\geq 0}A'$. Then a high power of
$\del^{q\cdot \bolda+p\cdot\bolda_0}$ is in $\CC[\ZZ A\cap \RR_{\geq
    0}A']$ and a suitable power $\del^\boldb$ of that will be in
$\CC[\ZZ A'\cap \RR_{\geq 0}A']$ because of the finiteness of $\ZZ
A/\ZZ A'$. Now let $\tau$ be the smallest face of $\RR_{\geq 0}A'$
that contains $\boldb$; this makes $\boldb$ an interior point of
$\tau$. Since $\CC[\tau\cap \ZZ A']$ is a finitely generated
$\CC[\tau\cap\NN A']$-module, some power of $\del^\boldb$ is in
$\CC[\tau\cap\NN A']\subseteq S_{A'}$. This shows that some power of
$\del^{q\cdot \bolda}$ times some power of $\del^{p\cdot\bolda_0}$ is
in $S_{A'}$, establishing the first claim of the lemma.

In every composition chain for $S_{A'}/J_{A,0}$, each
composition factor is an $S_{A'}/\sqrt{J_{A,0}}$-module. Thus the
quasi-degrees of $S_{A'}/J_{A,0}$ are inside a union of shifted
quasi-degrees of $S_{A'}/\sqrt{J_{A,0}}$ and hence all
$\bolda_0$-visible, which implies the second claim.
\end{proof}

Our main theorem in this section is:

\begin{thm}\label{thm-gkz}
The root locus of the $b$-function $b(x_0\del_0)$ for restriction of
$M_{A}(\beta)$ along $x_0=0$ is, up to inclusion of non-negative
integers, contained in the locus of intersection
$(-\qdeg_{A'}(S_{A'}/\bar J_{A,0})+\beta)\cap \CC\cdot\bolda_0$. The
set of integers needed can be taken to be the integers $0,\ldots,k-1$
such that $J_{A,0}= \sum_{1\le i\leq k}J_{A,0;i}$.

In two extreme cases one can be explicit:
\begin{enumerate}
\item if $\dim S_A-1=\dim S_{A'}$ then the $b$-function is linear with
  root given by the intersection of $(-\qdeg_{A}(S_{A'})+\beta)\cap
  \CC\cdot\bolda_0$;
\item if $\bolda_0\in\RR_{\geq 0}{A'}$ then the $b$-function has
  integer roots in $\{0,1,\ldots,k-1\}$ where $k=\min\{t\in\NN\mid
  0\not= t\cdot\bolda_0\in\NN A'\}$.
\end{enumerate}
\end{thm}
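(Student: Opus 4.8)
The strategy mirrors the proof of Theorem~\ref{thm-fourier}, but now in the hypergeometric module itself rather than its Fourier transform, and the combinatorial input is the module $S_{A'}/\bar J_{A,0}$ instead of $S_A/\del_t S_A$. The plan is as follows. First I would set $t=0$ as the convention allows and observe that, because $x_0$ itself lies in $(V^1 D_A)$, any monomial $\del^\boldu$ with $u_0>0$ already lies in $(V^1D_A)\cdot 1$ modulo $H_A(\beta)$; more generally, if $\del_0^k\del^\boldu\in S_{A'}$ then, using a toric relation $\del_0^k\del^\boldu-\del^\boldw\in I_A$ with $w_0=0$ and $\del^\boldw\in S_{A'}$, we get $\del^\boldw\cdot 1 \in (V^kD_A)\cdot 1 + H_A(\beta)$. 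Thus the monomials in $\bar J_{A,0;k}$ are pushed into $(V^kD_A)\cdot 1$, and after dividing by $x_0^{\,k}$-worth of $\theta_0$-lowering (which costs at most the integers $0,1,\dots,k-1$ as $b$-function roots, exactly the claimed integer correction), all of $\bar J_{A,0}$ is controlled. This is the source of the ``up to non-negative integers $0,\dots,k-1$'' clause, and $k$ is precisely the least integer with $J_{A,0}=\sum_{1\le i\le k}J_{A,0;i}$.

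Next I would take a toric filtration of the toric $S_{A'}$-module $S_{A'}/\bar J_{A,0}$ with composition factors that are shifted face rings $S_{F'_\alpha}(\boldb_\alpha)$; by Lemma~\ref{lem-bolda} each such face $F'_\alpha$ is entirely $\bolda_0$-visible. Lift this to an ascending chain of ideals $J_\alpha$ of $S_{A'}$ with $J_0=\bar J_{A,0}$, and let $T_\alpha$ be the image of (a lift to $R_{A'}$ of) $J_\alpha$ in $M_A(\beta)$ via $R_{A'}\to D_A\to M_A(\beta)$. For each $\alpha$ I would invoke Lemma~\ref{lem-Euler}: choosing a facet $F_\alpha\supseteq F'_\alpha$ not containing $\bolda_0$ — possible because $F'_\alpha$ is visible, hence lies in such a facet — produces an Euler operator $E_{F_\alpha}-\beta_{F_\alpha}\in H_A(\beta)$ in which the coefficient of $\theta_0$ is $1$ and the coefficient of $\theta_j$ is $L_\alpha(\bolda_j)$, vanishing for $\bolda_j\in F_\alpha$. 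Writing $\kappa_\alpha=L_\alpha(\bar N_\alpha)$ for the constant value of $L_\alpha$ on the $A$-degrees of the $\alpha$-th composition factor, the same computation as in Lemma~\ref{lem-kappa} — rewriting $\theta_0\cdot\tilde m = (-E_{F_\alpha}+\theta_0)\tilde m + E_{F_\alpha}\tilde m$, using \eqref{eq-deg} and the fact that $\del_j$ annihilates the face-ring composition factor for $\bolda_j\notin F'_\alpha$ — shows $(\theta_0+\kappa_\alpha-\beta_\alpha)\cdot(V^0D_A)\cdot T_\alpha\subseteq (V^0D_A)\cdot T_{\alpha-1}$ modulo $(V^1D_A)$. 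Telescoping over $\alpha$ (and remembering that $T_0=\bar J_{A,0}$ was already absorbed into $(V^1D_A)\cdot 1$ up to the integer shifts), a divisor of $\prod_\alpha(\theta_0+\kappa_\alpha-\beta_\alpha)$ times the integer factor $\prod_{0\le i<k}(\theta_0-i)$-type correction annihilates $1$ modulo $H_A(\beta)+(V^1D_A)$. Then, exactly as in Theorem~\ref{thm-fourier}, $(\beta_\alpha-\kappa_\alpha)\bolda_0$ is the intersection of the hyperplane $L_\alpha=\beta_\alpha-\kappa_\alpha$ with $\CC\cdot\bolda_0$; intersecting over all facet choices and using $\qdeg_{A'}(S_{A'}/\bar J_{A,0})=\bigcup_\alpha\qdeg_{A'}(\bar N_\alpha)$ gives the asserted containment of the root locus in $(-\qdeg_{A'}(S_{A'}/\bar J_{A,0})+\beta)\cap\CC\cdot\bolda_0$.

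For the two explicit cases: in case (1), $\dim S_A-1=\dim S_{A'}$ forces $\ZZ A/\ZZ A'$ to have positive rank (equivalently $\bolda_0\notin\RR A'$), so $J_{A,0}=0$, every point of $\NN A'$ is visible, $k=1$ (no integer correction), and $S_{A'}/\bar J_{A,0}=S_{A'}$ is already ``face-like'' enough that a single Euler operator suffices — the chain can be taken of length one with $\bar N_1=S_{A'}$ itself — giving a linear $b$-function with the stated root; I would double-check that $\qdeg_{A'}(S_{A'})=\qdeg_A(S_{A'})$ here, which is immediate since the $A'$-grading is the restriction of the $A$-grading. In case (2), $\bolda_0\in\RR_{\ge0}A'$ means $\bar J_{A,0}=S_{A'}$ by the remark preceding Lemma~\ref{lem-bolda}, so $S_{A'}/\bar J_{A,0}=0$ contributes no $\kappa_\alpha$-roots at all and the entire $b$-function comes from the integer correction: $\del_0^k\in S_{A'}$ for $k=\min\{t>0\mid t\bolda_0\in\NN A'\}$, so $\del_0^k\cdot 1\in H_A(\beta)+(V^kD_A)\cdot 1$, and lowering by $\theta_0$ a total of $k$ times produces the polynomial $\prod_{i=0}^{k-1}(\theta_0-i)$ — whence integer roots in $\{0,1,\dots,k-1\}$. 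The main obstacle I anticipate is the bookkeeping in the first paragraph: making precise that the passage from $\del_0^k\del^\boldu\in(V^kD_A)\cdot 1$ down to $\del^\boldu\in(V^1D_A)\cdot 1$ costs exactly the integers $0,\dots,k-1$ and no more, and that this is compatible with — rather than doubly counted against — the $\kappa_\alpha$-contribution from the composition chain of $S_{A'}/\bar J_{A,0}$; organizing the commutator estimates $[\theta_0,(V^0D_A)]\subseteq(V^1D_A)$ cleanly across the two layers is where care is needed.
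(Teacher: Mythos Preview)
Your plan is essentially the paper's own proof: handle $\bar J_{A,0}$ by the identity $x_0^k\del_0^k=\theta_0(\theta_0-1)\cdots(\theta_0-k+1)$ together with a toric relation, then run a toric filtration of $S_{A'}/\bar J_{A,0}$, choose for each composition factor a facet of $A'$ whose span misses $\bolda_0$ (this is where Lemma~\ref{lem-bolda} enters), use the corresponding Euler operator from Lemma~\ref{lem-Euler} to peel off one layer, telescope, and finally intersect over facet choices to get the quasi-degree description. The two extreme cases are also handled the same way.

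The one point that needs correcting is the $V$-filtration bookkeeping in your first paragraph, which is stated with the wrong sign. A monomial $\del^\boldu$ with $u_0>0$ lies in $V^{-u_0}D_A$, not $V^1D_A$; and for $\del^\boldu\in\bar J_{A,0;k}$ with $\del_0^k\del^\boldu-\del^\boldw\in I_A$, $w_0=0$, one does \emph{not} have $\del^\boldw\in (V^kD_A)\cdot 1$. The correct mechanism (and what the paper does) is to multiply by $x_0^k$: then
\[
\theta_0(\theta_0-1)\cdots(\theta_0-k+1)\,\del^\boldu
= x_0^k\del_0^k\del^\boldu
= x_0^k(\del_0^k\del^\boldu-\del^\boldw)+x_0^k\del^\boldw
\in H_A(\beta)+V^kD_A\subseteq H_A(\beta)+V^1D_A,
\]
so the $b$-function of the coset of $\del^\boldu$ divides $s(s-1)\cdots(s-k+1)$. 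There is no ``dividing by $x_0^k$'' or ``$\theta_0$-lowering''; it is a straight left-multiplication. Once you fix this, the obstacle you flag disappears: the telescoping product $\prod_\alpha(\theta_0+\kappa_\alpha-\beta_\alpha)$ lands $1$ in $H_A(\beta)+(V^0D_A)J_{A,0}+V^1D_A$, and then a further left-multiplication by $\theta_0(\theta_0-1)\cdots(\theta_0-k+1)$ finishes the job, with no double-counting to worry about.
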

\begin{proof}
We first dispose of the extreme cases.  If $\dim S_A-1=\dim S_{A'}$,
then $S_{A}$ is the polynomial ring $S_{A'}[\del_0]$ and $A'$ is a
facet of $A$. By Lemma \ref{lem-Euler} there is
$\boldv=(v_1,\ldots,v_d)$ such that the Euler operator
\[
E-\beta_E=\sum v_i(E_i-\beta_i)
\]
is in $H_A(\beta)$ and equals $\theta_0-\beta_E$.  In particular, the
$b$-function is $s-\beta_E$. On the other hand: $\bar J_{A,0}$ is zero
in this case, $\boldv=(v_1,\ldots,v_d)$ is in the kernel of ${A'}^T$,
and $\bolda_0^T\boldv=1$. Therefore, the quasi-degrees of $S_{A'}/\bar
J_{A,0}$ form the hyperplane given as the kernel of $\boldv$ and
$(\boldv^T\beta)\bolda_0=\beta_E\bolda_0$ is the intersection of
$-\qdeg_{A}(S_{A'})+\beta$ with $\CC\bolda_0$.

If $\bolda_0\in\RR_{\geq 0}A'$ then $\NN\bolda_0$ meets $\NN A'$ and
so $\del_0^k=\del^\boldu$ with $\boldu=(0,u_1,\ldots,u_n)\in\NN
A'$. In particular, $J_{A,0}=S_{A'}$ in this case.  Moreover,
$(x_0\del_0)(x_0\del_0-1)\cdots (x_0\del_0-k+1)=x_0^k\del_0^k=
x_0^k(\del_0^k-\del^\boldu)+x_0^k\del^\boldu \in
H_{A}(\beta)+V^1(D_{A})$ shows the claim made in this case.

Now suppose that $A$ and $A'$ have equal rank but $\bolda_0\not\in
\RR_{\geq 0}A'$. In that case, $\bar J_{A,0}$ is a non-trivial ideal of
$S_{A'}$. We shall use a toric filtration 
\[
(N)\quad :\quad 0=N_0\subsetneq N_1\subsetneq
\ldots\subsetneq N_t=S_{A'}/\bar J_{A,0}
\]
and let $J_\alpha\supseteq J_{A,0}$ be the $R_{A'}$-ideal such that
$N_\alpha=J_\alpha/J_{A,0}$.  We will view $J_\alpha$ as subset of
$D_{A'}$ or even $D_{A}$. In analogy to the previous case, for any
$\del^\boldu$ in $J_{A,0;k}$ the $b$-function along $x_0$ of the coset
of $\del^\boldu$ in $M_{A}(\beta)$ divides
$s(s-1)\cdots(s-k+1)$. Indeed, $\del^\boldu\in J_{A,0;k}$ implies that
$\del_0^k\del^\boldu-\del^\boldv\in I_{A}$ for some $\boldv$ with
$v_0=0$, and so $x_0^k\del_0^k\del^\boldu\in
H_{A}(\beta)+V^{1}(D_{A})$. In particular, the root set of the
$b$-function of the coset of $\del^\boldu$ in $M_{A'}(\beta)$ is
inside the set of integers described in the statement of the theorem.

For each composition factor $\bar N_\alpha=N_\alpha/N_{\alpha-1}$
choose now a facet $\tau_\alpha$ of $A'$ and an element
$\del^{\boldu_\alpha}$ of $S_{A'}$
$\boldu_\alpha
\in\{0\}\times\NN^n$ such that $N_\alpha$ is a quotient of
$S_{A'}\cdot \del^{\boldu_\alpha}$ and such that the annihilator of
$\del^{\boldu_\alpha}$ in $\bar N_\alpha$ contains the toric ideal
$I_{A'}^{\tau_\alpha}$. Then $\qdeg_{A'}(\bar N_\alpha)$ is contained
in $A'\cdot\boldu_\alpha+\qdeg_{A'}(S_{\tau_\alpha})$.

Since $\bolda_0$ is not in $\RR_{\geq 0}A'$, Lemma \ref{lem-bolda}
shows that the facet $\tau_\alpha$ can be chosen such that
$\bolda_0\not\in \QQ\cdot \tau_\alpha$. Indeed, if an entire face of $\RR_{\geq
  0}A'$ is visible from $\bolda_0$ then it sits in at least one facet
whose span does not contain $\bolda_0$.  By Lemma \ref{lem-Euler}
there is an element $E_\alpha$ of the Euler space of $A$ that does not
involve any element of $\tau_\alpha$, but which has coefficient $1$
for $\theta_0$. Notation \ref{ntn-Euler-space} then associates a degree function
$\deg_{E_\alpha}(-)$ to $\alpha$. 

As $\del_j\cdot\del^{\boldu_\alpha}\in N_{\alpha-1}$ for
$j\not\in\tau_\alpha$ it follows that the difference of
$(E_\alpha-\beta_\alpha)\cdot \del^{\boldu_\alpha}$ and
$(\theta_0-\beta_\alpha)\cdot \del^{\boldu_\alpha}$ is inside
$(V^0D_A)N_{\alpha-1}$.  Since $E_\alpha-\beta_\alpha$ is in
$H_{A}(\beta)$, so is $\del^{\boldu_\alpha}(E_\alpha-\beta_\alpha)=
(E_\alpha-\beta_\alpha+\deg_{E_\alpha}(\del^{\boldu_\alpha}))\del^{\boldu_\alpha}$.
Therefore,
$(\theta_0-\beta_\alpha+\deg_{E_\alpha}(\del^{\boldu_\alpha}))\del^{\boldu_\alpha}$
is in $H_{A}(\beta)+(V^0D_{A})N_{\alpha-1}$.  Then, in parallel to how
Lemma \ref{lem-kappa} was used in the proof of Theorem
\ref{thm-fourier}, the product
\[
\prod_\alpha (
\theta_0-\beta_\alpha+\deg_{E_\alpha}(\del^{\boldu_\alpha}))
\]
multiplies $1\in D_A$ into
$H_{A}(\beta)+(V^0D_{A})J_{A,0}+(V^{1}D_A)$. Multiplying by
$x_0^k\del_0^k$ for suitable $k$ one obtains the desired bound for the
$b$-function as in the second paragraph of the proof.

It follows as in Theorem \ref{thm-fourier} (with the
modification that we have here $\theta_0$ rather than
$\calF^{-1}(\theta_0)$, which affects signs) that the intersection of the
roots of all such bounds is the intersection of
$(-\qdeg_{A'}(S_{A'}/\bar J_{A,0})+\beta)$ with the line $\CC\cdot\bolda_0$.
\comment{Finally, note that the $b$-function should also annihilate the
Euler--Koszul homology module $\calH_0(\bar N_\alpha;\beta)$ since
$\del_A$ commutes with $\theta_0$. But each such module requires at
least the indicated linear factor and that shows that the bound is
sharp up to radical.}
\end{proof}

\begin{exa}
With
$A=(\bolda_0,\bolda_1,\bolda_2)=\begin{pmatrix}-1&0&3\\1&1&1\end{pmatrix}$,
consider the $b$-function along $x_1$ of the $A$-hypergeometric
system. The ideal $J_{A,1}$ is generated by $1\in
S_{A'}=\CC[\NN(\bolda_0,\bolda_2)]$ since $\del_1^4$ is in $S_{A'}$. The set of
necessary integer roots is then $\{0,1,2,3\}$. No other roots are
needed since $S_A/J_{A,1}$ is zero, irrespective of $\beta$.

\begin{figure}[h]
\caption{The elements of $S_A\smallsetminus S_{A'}$ (black) and
  $S_{A'}$ (green) for restriction to $x_1$}
\begin{center}
\includegraphics[width=0.5\textwidth]{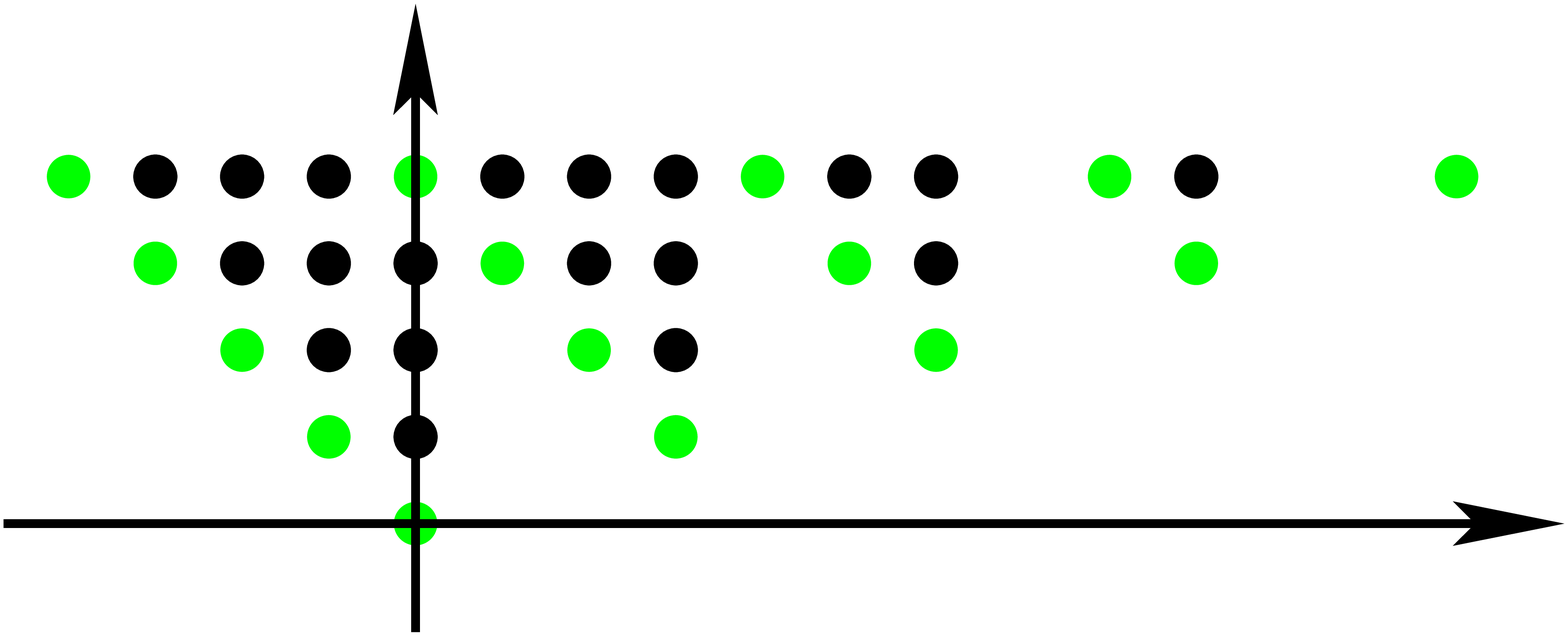}%
\end{center}
\end{figure}

Restriction to $(x_2=0)$ behaves differently. As
$S_{A'}=\CC[\NN(\bolda_0,\bolda_1)]$ now, $J_{A,2}=J_{A,2;1}$ is generated
by $\del_0^3$, and the quasi-degrees of $S_{A'}/J_{A,2}$ are the lines
$\CC\cdot (0,1)+(i,0)$ with $i=0,-1,-2$.
\begin{figure}[h]
\caption{The quasi-degrees of $S_A/J_{A,2}$ form three parallel lines.}
\begin{center}
\includegraphics[width=0.5\textwidth]{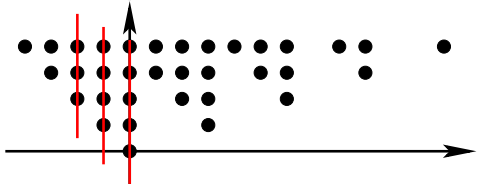}%
\end{center}
\end{figure}
The intersection of the
negative of these
three lines, shifted by $\beta$, with the line $\CC\cdot \bolda_2$ is
$\bolda_2\cdot\{(i+\beta_1)/3\}_{i=0,1,2}$. So the $b$-function has
(at worst) roots $\{0,\beta_1,\beta_1+1,\beta_1+2\}/3$.
\end{exa}
\begin{rmk}
We believe that both bounds in Theorems \ref{thm-fourier} (as is) and
\ref{thm-gkz} (up to integers) are sharp.
\end{rmk}

\subsection{Restriction to a generic point}

We suppose here that $A$ is homogeneous; in other words, the Euler
space contains a homothety.  Let $p=(p_0,\ldots,p_{n})$ be a point
of $\CC^{n+1}$. We wish to estimate here the $b$-function for
restriction of $M_A(\beta)$ to the point $-p$ if $p$ is
generic. As a holonomic module is a connection near any generic point,
this restriction yields a vector space isomorphic to the space of
solutions to $H_A(\beta)$ near $-p$, see \cite[Sec.~5.2]{SST}.

\begin{dfn} \label{dfn-bfu-point} Let
  $\theta_p=(x_0+p_0)\del_0+\ldots+(x_n+p_n)\del_n$ and write
  $\theta$ for $\theta_p$ if $p=0$.
  The $b$-function for restriction of a principal $D$-module $M=D/I$ to the
  point $x+p=0$ is the minimal polynomial $b_p(s)$ such that
  $b_p(\theta_p)\in I+(V^1_p D)$ where
  $V^k_p D$ is the Kashiwara--Malgrange $V$-filtration along
  $\Var(x+p)$:
\[
V^k_p D=\CC\cdot\{(x+p)^\boldu\del^\boldv\quad \mid
\quad |\boldu|-|\boldv|\geq k\}.
\]
\end{dfn}

\begin{rmk}
\begin{asparaenum}
\item For any pair of manifolds $Y\subseteq X$ and given a $D$-module
  $M$ on $X$ one can define a $b$-function of restriction for the
  section $m\in M$ along $Y$ by a formula generalizing both Definition
  \ref{dfn-bfu-hyperplane} and
  Definition \ref{dfn-bfu-point}. Kashiwara proved their existence
  for holonomic $M$.
\item
The roots of this $b$-function here relate to restriction of solution
sheaves as follows. Near a generic point
$x+p=0$, a $D$-module $M$ is a connection whose solution space has
a basis consisting of a certain number of holomorphic functions. The
germs of these functions form a vector space that can be identified
with the dual of the $0$-th homology group of
$(D/(x+p)D)\otimes^L_D M$. Filtering this complex by
$V^\bullet_p D$, $b_p(k)$ annihilates the $k$-th graded part of
its homology, compare \cite{Oaku-AAM,OakuTakayama-algs, Walther-cdrc}. In
particular, $b_p(s)$ carries information on the starting terms of
the solution sheaf of $M$ near $x+p=0$.
\end{asparaenum}
\end{rmk}

The purpose of this section is to bound $b_p(s)$ for $I=H_A(\beta)$
and generic $p$ with the following strategy.  We first show that a
polynomial $b(s)$ is a multiple of $b_p(s)$ if $b(\theta)$ is in
$D_A(I_A,A\cdot\calE\cdot\del)$ where
 \[
\calE=\begin{pmatrix}p_0&0&\cdots&0\\
0&p_1&&\vdots\\
\vdots&&\ddots&0\\
0&\cdots&0&p_n
\end{pmatrix},
\]
provided that $p$ is component-wise nonzero.
The generators of $D_A(I_A,A\cdot\calE\cdot\del)$ are independent of
$x$ and we next observe that the radical of
$R_A(I_A,A\cdot\calE\cdot\del)$ is $R_A\cdot\del$, provided that
$p$ is generic. Thus, $b_p(s)$ will be a factor of any
polynomial that annihilates the finite length module
$R_A/(I_A,A\cdot\calE\cdot\del)$ as long as $p$ is generic. We
exhibit a particular such polynomial with all roots integral. In the
case of a normal semigroup ring, we show that the (necessarily integral)
roots of $b_p(s)$ are in the interval $[0,d-1]$.

\medskip

We begin with pointing out that $b(\theta_p)\in I+(V^1_p D)$ is
equivalent to $b(\theta)\in I_{p}+(V^1_0D)$ where $I_{p}$ is the image
of $I$ under the morphism induced by $x\mapsto x-p$, $\del\mapsto\del$
and $(V^k_0D)$ is the Kashiwara--Malgrange filtration along the
origin.  Among the generators of $I=H_A(\beta)$, only the Euler
operators depend on $x$ while $(I_A)_p=I_A$ for any $p$; one has
$(E_i-\beta_i)_{p}=\sum
a_{i,j}(x_j-p_j)\del_j-\beta_i=E_i-\beta_i-\sum a_{i,j}p_j\del_j$.  We
hence seek a relation $b(\theta)\in D_A\cdot (I_A,E-\beta-A\cdot
\calE\cdot\del)+(V^1_0D_A)$ with $\calE$ as above.

Generally, a statement $b(\theta)\in I+(V^1_0D_A)$ is equivalent to
$b(\theta)$ being in the degree zero part $\gr^0_{V_0}(I)$ of the
associated graded object. Note that $\gr_{V_0}(D_A)$ is a Weyl algebra
again (although of course the symbol map $D_A\to \gr_{V_0}(D_A)$ is
not an isomorphism).  Abusing notation, we denote $x$ and $\del$ also
the symbols in $\gr_{V_0}(D_A)$ of the respective elements of $D_A$.
By the previous paragraph then, the graded ideal
$\gr_{V_0}(H_A(\beta)_p)$ contains the elements that generate $I_A$
(since $I_A$ is homogeneous!), as well as the elements
$A\cdot\calE\cdot\del$ which arise as the $V_0$-symbols of
$E_p-\beta$.

We need the following folklore result ) for which we know no explicit
reference. 
\begin{clm}
The $R_A$-ideal generated by $I_A$ and $A\cdot\calE\cdot\del$ has, for
generic $\calE$, radical $R_A\cdot\del$. 
\end{clm}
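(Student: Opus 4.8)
The plan is to prove the claim by a Zariski-density argument on the space of diagonal matrices $\calE$, combined with a dimension count. First I would observe that the ideal $I_A + R_A\cdot\del$ equals $R_A\cdot\del$ itself (since $I_A$ is generated by binomials $\del^{\boldv_+}-\del^{\boldv_-}$, each of which lies in $R_A\cdot\del$ once $A\boldv = 0$ with $\boldv\neq 0$ forces both $\boldv_+$ and $\boldv_-$ to be nonzero), so the inclusion $\sqrt{(I_A, A\cdot\calE\cdot\del)}\subseteq R_A\cdot\del$ is automatic and independent of $\calE$: indeed $A\cdot\calE\cdot\del$ is a vector of $\CC$-linear combinations of the $\del_j$, hence lies in $R_A\cdot\del$. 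The content of the claim is therefore the reverse inclusion $R_A\cdot\del\subseteq\sqrt{(I_A,A\cdot\calE\cdot\del)}$ for generic $\calE$, equivalently that $V(I_A,A\cdot\calE\cdot\del)\subseteq\Spec R_A$ is set-theoretically just the origin $\{\del=0\}$.

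Next I would translate this into a statement about the toric variety. Write $X = \Spec S_A = \Spec(R_A/I_A)$, the affine toric variety whose torus $T$ has character lattice $\ZZ A$; its coordinate ring is $\CC[\NN A]$. The $d$ linear forms $\ell_i := \sum_j a_{i,j}p_j\,\del_j$ (the rows of $A\cdot\calE\cdot\del$) cut out, inside $X$, the locus where all $\ell_i$ vanish. On the dense torus $T\subseteq X$ the functions $\del_j$ are the monomials $t^{\bolda_j}$ and are units, so the vanishing locus of $(\ell_1,\dots,\ell_d)$ meets $T$ in the subscheme of $T$ defined by $d$ generic "linear" combinations of the $n+1$ monomials $t^{\bolda_0},\dots,t^{\bolda_n}$. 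Since $A$ has rank $d$, these $d$ forms are linearly independent as functions on $T$ after the generic choice of the $p_j$; and because $\dim T = d$, a generic system of $d$ equations on the $d$-dimensional torus has \emph{empty} solution set — this is the key point, and it is where genericity of $\calE$ is used. Concretely: the map $T\to\CC^{n+1}$, $t\mapsto(t^{\bolda_0},\dots,t^{\bolda_n})$, followed by the generic linear projection $\CC^{n+1}\to\CC^d$ given by $A\cdot\calE$, is a dominant (indeed generically finite, by homogeneity/rank considerations) morphism $T\to\CC^d$, so for generic $\calE$ the fiber over $0\in\CC^d$ is zero-dimensional; but a homogeneity argument (using that $A$ is assumed homogeneous, so $\boldone_A$ is in the rowspan, making the monomial map equivariant for a $\CC^*$-scaling) forces that zero-dimensional fiber on $T$ to be empty, since any point of it would have to be fixed by the scaling action, contradicting being in the torus. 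Hence $V(I_A, A\cdot\calE\cdot\del)\cap T = \emptyset$.

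It then remains to pass from the torus to all of $X$. The orbit-cone correspondence decomposes $X$ into finitely many torus orbits $O_\tau$, one for each face $\tau$ of $\RR_{\geq 0}A$, with closures $\bar O_\tau = \Spec\CC[\NN\tau] = \Spec(S_A/I_A^\tau)$. The point $0\in X$ is the orbit corresponding to the face $\tau = \{0\}$ (present exactly when the origin is a face of the cone, which it is here since $A$ is homogeneous, all $\bolda_j$ lying on an affine hyperplane off the origin). I would argue by downward induction on $\dim\tau$ that for generic $\calE$ the vanishing locus meets each positive-dimensional $O_\tau$ trivially: restricting the $\ell_i$ to $\bar O_\tau$ amounts to discarding the variables $\del_j$ with $\bolda_j\notin\tau$ and repeating the torus argument for the matrix $A_\tau$ whose columns are the $\bolda_j\in\tau$, whose rank is $\dim\tau =: d_\tau$; the same generic-system-of-$d_\tau$-equations-on-a-$d_\tau$-torus count (now using that the submatrix $A\cdot\calE$ restricted to those columns is still generic) gives empty intersection with $O_\tau$ whenever $d_\tau \geq 1$. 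Since there are finitely many faces, a single generic $\calE$ works for all of them simultaneously (a finite intersection of dense opens is dense open). Thus $V(I_A, A\cdot\calE\cdot\del)$ is contained in the union of the zero-dimensional orbits, i.e.\ is supported at the origin, giving $\sqrt{(I_A, A\cdot\calE\cdot\del)} = R_A\cdot\del$.

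The main obstacle I anticipate is making the "generic system of $d$ equations on a $d$-dimensional torus has no solutions" step fully rigorous and uniform across all faces: one must be careful that the genericity condition on $\calE$ — which enters only through the $n+1$ scalars $p_j$ on the diagonal — is strong enough to force the empty-intersection conclusion simultaneously for $A$ and for every face submatrix $A_\tau$, and that no degeneracy hides in the interplay between $I_A^\tau$ and the linear forms. The clean way around this is to phrase everything via the monomial map: for each face $\tau$, the composite $T_\tau \xrightarrow{(t^{\bolda_j})_{\bolda_j\in\tau}} \CC^{F_\tau} \xrightarrow{A\cdot\calE} \CC^{d_\tau}$ is, for generic $\calE$, a dominant morphism between varieties of the same dimension whose fiber over $0$ is therefore a proper closed (hence finite) subset — and then invoke the torus-scaling fixed-point argument from the homogeneity of $A$ to upgrade "finite" to "empty." Once this is set up, everything else is bookkeeping with the orbit decomposition.
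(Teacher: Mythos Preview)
Your orbit decomposition and your recognition that homogeneity supplies a $\CC^*$-scaling with no fixed points on the torus are both on target and match the paper's strategy. The gap is precisely at the step you flag as the main obstacle and then claim to resolve: from the fact that $\Phi_\calE\colon T\to\CC^d$ is dominant between equidimensional varieties you conclude that its fiber over $0$ is ``therefore a proper closed (hence finite) subset.'' Dominance controls only the \emph{generic} fiber; the fiber over the special point $0$ can jump in dimension, and a proper closed subset of a $d$-dimensional torus need not be finite. Worse, since the free $\CC^*$-action already forces any nonempty $\Phi_\calE^{-1}(0)$ to have positive dimension, the assertion ``$\Phi_\calE^{-1}(0)$ is finite for generic $\calE$'' is \emph{equivalent} to ``$\Phi_\calE^{-1}(0)$ is empty for generic $\calE$.'' So your proposed intermediate step is not a step toward the conclusion but the conclusion itself in disguise.

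The paper closes this gap by reversing the quantifiers: instead of fixing $\calE$ and analyzing one map, it bounds the locus of bad $p$ directly. Rewriting $A\,\diag(p)\,y=0$ as $A\,\diag(y)\,p=0$ shows that, for $y$ in the dense orbit, the condition says exactly that $p$ lies in the conormal space to the orbit at $y$; the set $C(X)$ of all such $p$ is a $\dim X$-parameter family of $(n{+}1{-}\dim X)$-dimensional linear spaces and could a priori fill $\CC^{n+1}$. Now the conical structure enters on the $y$-side rather than inside a fixed fiber: since $X=\Var(I_A)$ is a cone, the conormal spaces at $y$ and $\lambda y$ agree, so $C(X)$ is already parametrized by the $(\dim X{-}1)$-dimensional base $\Proj(X)$, whence $\dim C(X)\le n$. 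Translated into your language: the incidence variety $\{(p,t):\Phi_p(t)=0\}$ has dimension $n{+}1$, its projection to $p$-space has all nonempty fibers of dimension $\ge 1$ by the $\CC^*$-action, and therefore the image has dimension $\le n$. This dimension count replaces your dominance step; it does not follow from it.
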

A sequence of $d$ generic linear forms is of course a system of
parameters on $S_A$; the issue is to show that linear forms of the
type $A\cdot\calE\cdot\del$ are sufficiently generic.
\begin{proof}
As $I_A$ and $A\cdot\calE\cdot\del$ are standard graded,
$\Var(I_A,A\cdot \calE\cdot\del)$ is a conical
variety. It thus suffices to show that the ideal $\Var(I_A,A\cdot
\calE\cdot\del)$ is of height $n+1$.

The ideal $R_A[x](I_A,A\cdot\theta)$ in the polynomial ring $R_A[x]$
defines in the cotangent bundle $\Spec(R_A[x])$ of $\CC^{n+1}$ the
union of the conormals to each torus orbit since the Euler fields are
tangent to the torus and span a space of the correct dimension in each
orbit point. Suppose the claim is false, so that there is a nonzero
point $y\in\Var(I_A)$ such that (the generically chosen vector) $p$ is
a conormal vector to the orbit of $y$.  If $y$ is in a torus orbit
$O_\tau$ associated to a proper face $\tau$ of $A$ then its
coordinates corresponding to $A\smallsetminus\tau$ are zero and we can
reduce the question to the case where $A=\tau$. It is hence enough to
show that there is $p\in\CC^{n+1}$ such that $p$ is not a conormal
vector to any smooth point of $\Var(I_A)$.

Let $X\subseteq \CC^{n+1}$ be any reduced affine variety and denote
$X_0$ its smooth locus. We define a set $C(X)$ inside $\CC^{n+1}$ by
setting
\[
[\eta\in C(X)]\iff [\exists y\in X_0,\quad \eta\in (T^*_{X_0}(\CC^{n+1}))_y]
\]
where $(T^*_{X_0}(\CC^{n+1}))_y$ is the fiber of the conormal bundle
at $y$ of the pair $X_0\subseteq \CC^{n+1}$. This is a constructible,
analytically
parameterized union of a $\dim(X)$-dimensional family of vector spaces
of dimension $n+1-\dim(X)$, which hence might fill $\CC^{n+1}$.  

Now suppose that $X$ is a conical variety; then the conormals of $y$
and $\lambda y$ agree for all $\lambda\in\CC^*$. In particular,
\[
C(X)=\bigcup_{\bar y\in\Proj(X)}(T^*_{X_0}(\CC^{n+1}))_y
\]
where $\Proj(X)$ is the associated projective variety. But this is now
an analytically parameterized union of a $(\dim(X)-1)$-dimensional
family of vector spaces of dimension $n+1-\dim(X)$.  It follows that
most elements of $\CC^{n+1}$ are outside $C(X)$ in this case, and the
claim follows. 
\end{proof}
It follows from the Claim that $\gr_{V_0}(H_A(\beta)_p)$ contains
all monomials in $\del$ of a certain degree $k$ that depends on
$A$. Let $E=\theta_0+\ldots+\theta_n$; by hypothesis $E-\beta_E\in
H_A(\beta)$. 
\begin{lem}
Denote $\del_A^k$ the set of all monomials of degree $k$ in
$\del_0,\ldots,\del_n$, and $D_A\cdot\del_A^k$ the left $D_A$-ideal
generated by $\del_A^k$. Then in $D_A/D_A\cdot \del_A^k$, the identity
$E(E-1)\cdots(E-k+1)\cong 0$ holds.
\end{lem}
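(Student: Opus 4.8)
The plan is to show that the operator $E(E-1)\cdots(E-k+1)$ applied to $1 \in D_A$ lands in the left ideal $D_A\cdot\del_A^k$. The key observation is that $E = \sum_j \theta_j = \sum_j x_j\del_j$ is, up to a constant, the Euler operator counting degree in the $\del$-variables: on any monomial $\del^\boldv$ with $|\boldv|=m$, we have $E\bullet \del^\boldv = -m\,\del^\boldv$ (the sign coming from $[\del_j, x_j\del_j] = -\del_j$, so $\theta_j\cdot\del_j = \del_j(\theta_j - 1) = \del_j x_j\del_j - \del_j$, hence $\theta_j$ acts on $\del_j^{a}$ by $-a$). More precisely, working inside $D_A$ with the commutation rule $\del^\boldv\cdot x_j\del_j = x_j\del_j\del^\boldv - v_j\del^\boldv$, one gets $\del^\boldv \cdot E = (E - |\boldv|)\del^\boldv$ for every monomial $\del^\boldv$ of total degree $|\boldv|$.

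First I would record this identity $\del^\boldv E = (E - |\boldv|)\del^\boldv$ for $|\boldv| = k$; equivalently $E\,\del^\boldv = \del^\boldv(E + k)$, but the form we want is to move $E$ to the \emph{left} past a degree-$k$ monomial picking up the shift. Then, since every element of $\del_A^k$ has the form $\del^\boldv$ with $|\boldv|=k$, we get for each such monomial
\[
E(E-1)\cdots(E-k+1)\cdot \del^\boldv = \del^\boldv\cdot(E-k)(E-k+1)\cdots(E-1)\cdot 1 \quad\text{?}
\]
— I need to be careful about the direction of the shift. Iterating $\del^\boldv E = (E-k)\del^\boldv$ we obtain $\del^\boldv\cdot q(E) = q(E-k)\cdot\del^\boldv$ for any polynomial $q$. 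That is the wrong direction for what is wanted. Instead I would argue directly: write $E(E-1)\cdots(E-k+1) = \sum_{|\boldv|=k} c_\boldv(x)\,\del^\boldv$ plus lower-order-in-$\del$ terms, by expanding $E^k$ in left-normal form (all $\del$ to the right); the point is that after normalizing, $E(E-1)\cdots(E-k+1)$ has \emph{no} monomials of $\del$-degree strictly less than $k$. This is exactly the statement that the falling factorial of the $\del$-grading operator kills everything of $\del$-degree $<k$: for a monomial $\del^\boldv$ with $|\boldv| = m < k$, one of the factors $(E - m)$ in the product $E(E-1)\cdots(E-k+1)$ annihilates $\del^\boldv \bullet$ (acting on the left module $D_A/D_A\del$, or equivalently after moving everything to left-normal form the degree-$m$ part vanishes because $E\bullet \del^\boldv = -m\del^\boldv$ shows $E + m$ annihilates... again a sign bookkeeping issue that I would pin down by direct computation on $D_A/D_A\del_A^1$, where $E$ acts as multiplication by minus the $\del$-degree).

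Concretely, the clean argument: on the left $D_A$-module $D_A/D_A\cdot\del_A^k$, a spanning set is given by cosets of $x^\bolda\del^\boldv$ with $|\boldv| < k$. The operator $E$ acts on this module, and I claim $(E + |\boldv|)$ — or $(E-|\boldv|)$, depending on the sign convention established above — acts on the span of $\{x^\bolda\del^\boldv : |\boldv| = m\}$ (which is $E$-stable modulo higher $\del$-degree, since $[E,x_j] = x_j$ raises nothing in $\del$-degree and $[E,\del_j]=-\del_j$ preserves it) as a nilpotent plus scalar $-m$. Therefore the graded pieces for $\del$-degree $m = 0,1,\ldots,k-1$ are killed respectively by $E - 0, E - 1, \ldots, E-(k-1)$ up to terms of strictly higher $\del$-degree, and a downward induction on $m$ (starting from $m = k-1$, where higher degree means $\geq k$, i.e.\ zero in the quotient) shows that the full product $E(E-1)\cdots(E-k+1)$ annihilates the whole module.

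The main obstacle is purely the sign/shift bookkeeping — getting the commutation $\del^\boldv E$ versus $E\del^\boldv$ exactly right and confirming that the falling factorial $E(E-1)\cdots(E-k+1)$ (as opposed to $E(E+1)\cdots(E+k-1)$) is the correct product. I expect this is resolved by a one-line check on $D_A/D_A\del_j$: there $x_j\del_j \bullet \del_j^{a}\bmod D_A\del_j^{?}$... actually the cleanest check is $x_0\del_0 \bullet \del_0 = (\del_0 x_0 - 1)\del_0 = \del_0(x_0\del_0) - \del_0$, so on a symbol of $\del$-degree $m$ the operator $\theta = \sum x_j\del_j$ satisfies $\theta\cdot\del^\boldv = \del^\boldv\theta - m\del^\boldv = \del^\boldv(\theta - m)$; hence $E\cdot\del^\boldv = \del^\boldv(E-m)$ and iterating, $q(E)\cdot\del^\boldv = \del^\boldv\, q(E-m)$ in $D_A$ for any polynomial $q$. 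Taking $q(s) = s(s-1)\cdots(s-k+1)$ and $\boldv = 0$ is trivial; the real use is: modulo $D_A\cdot\del_A^k$, write $1$'s image and note $q(E)\cdot 1$, after left-normalization, is a sum of terms $x^\bolda\del^\boldv$; for those with $|\boldv| = m < k$ the corresponding graded summand equals (the degree-$m$ part of) $\del^\boldv q(E - m)\cdot(\text{stuff})$, and since $q(s) = s\cdots(s-k+1)$ has $q(m) = 0$ for $0\le m\le k-1$ — wait, $q(m)=0$ precisely when $m\in\{0,1,\ldots,k-1\}$, which is exactly our range — the scalar part vanishes and only strictly-higher $\del$-degree survives, forcing $q(E)\cdot 1 \in D_A\cdot\del_A^k$. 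That closes the argument.
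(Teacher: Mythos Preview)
You have correctly isolated the commutation identity $E\cdot\del^{\boldv}=\del^{\boldv}(E-|\boldv|)$, which is indeed the only ingredient needed. But the way you try to use it contains a genuine error. You assert that on the $\del$-degree-$m$ part of $D_A/D_A\cdot\del_A^k$ (spanned by $x^{\bolda}\del^{\boldv}$ with $|\boldv|=m$) left multiplication by $E$ acts as the scalar $m$ modulo higher $\del$-degree. A direct computation shows instead
\[
E\cdot x^{\bolda}\del^{\boldv}
=\sum_j x^{\bolda+e_j}\del^{\boldv+e_j}+|\bolda|\,x^{\bolda}\del^{\boldv},
\]
so the scalar part is $|\bolda|$, not $|\boldv|$. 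Hence $(E-m)$ does \emph{not} kill the $\del$-degree-$m$ graded piece in general, and your ``downward induction'' on $m$ does not establish that the product annihilates the module. (Your argument could be rescued by the extra observation that $E$ is homogeneous of weight $0$ for the grading $|\bolda|-|\boldv|$ and that $1$ sits in weight $0$, so along the orbit of $1$ one always has $|\bolda|=|\boldv|$; but you never make this observation, and the final paragraph's claim that the degree-$m$ part of $q(E)$ ``equals $\del^{\boldv}q(E-m)\cdot(\text{stuff})$'' is not justified.)

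The paper's proof uses the same commutation identity but packages it as a one-line induction on $k$: for $k=1$ one has $E=\sum_j x_j\del_j\in D_A\cdot\del_A^1$ trivially, and assuming $E(E-1)\cdots(E-k+2)\in D_A\cdot\del_A^{k-1}$, right-multiplication by $(E-k+1)$ gives
\[
E(E-1)\cdots(E-k+1)\in D_A\cdot\del_A^{k-1}\cdot(E-k+1)=D_A\cdot E\cdot\del_A^{k-1}\subseteq D_A\cdot\del_A^k,
\]
the equality being exactly $\del^{\boldv}(E-k+1)=E\del^{\boldv}$ for $|\boldv|=k-1$, and the inclusion following from $E=\sum_j x_j\del_j$. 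This bypasses any analysis of how $E$ acts on general basis elements of the quotient.
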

\begin{proof}
This is clear if $k=1$. In general, by induction,
\[
E(E-1)\cdots(E-k+1)\in
D_A\cdot\del_A^{k-1}\cdot(E-k+1)=D_A\cdot E\cdot\del_A^{k-1}\subseteq D_A\cdot
\del_A^k.
\]
\end{proof}

\begin{rmk} 
The homogeneity of $X$ is necessary in the Claim, since otherwise
$C(X)$ does not need to be contained in a hypersurface. Consider, for
example, $A=(2,1)$ in which case the union of all tangent lines
(nearly) fills the plane, and where the zero locus of $I_A$ and
$A\cdot\calE\cdot\del$ contains always at least two points.
\end{rmk}

The lemma implies that $\gr_{V_0}^0(H_A(\beta)_p)$ contains
$E(E-1)\cdots(E-k+1)$ if $p$ is generic. In other words, the
$b$-function for restriction of $M_A(\beta)$ to a generic point
divides $s(s-1)\cdots (s-k+1)$.

In some cases one can be more explicit about $k-1$, the top degree in
which $R_A/R_A(I_A,A\cdot\calE\cdot\del)$ is nonzero. Suppose $S_A$ is
a Cohen--Macaulay ring, then systems of parameters are regular
sequences. In particular, the Hilbert series of
$Q_A:=R_A/R_A(I_A,A\cdot\calE\cdot\del)$ is that of $S_A$ multiplied
by $(1-t)^d$. Suppose in addition, that $S_A$ is normal. Since we
already assume that $S_A$ is standard graded, let $P$ be the polytope
that forms the convex hull of the columns of $A$. The Hilbert series
of $S_A$ is then of the form $\sum_{m=0}^\infty p_m\cdot t^m$ where
$p_m$ is the number of lattice points in the dilated polytope $m\cdot
P$. This number of lattice points is counted by the Erhart polynomial
$E_P(m)$ of $P$, a polynomial of degree $d-1=\dim(P)$. If one writes
the Hilbert series of $S_A$ in standard form $Q(t)/(1-t)^d$ then the
Hilbert series of $Q_A$ is just the polynomial $Q(t)$. In particular,
the highest degree of a non-vanishing element of $Q_A$ is the degree
of $Q(t)$.

In order to determine $\deg(Q(t))$ let
$E_P(m)=e_{d-1}m^{d-1}+\ldots+e_0$. Now in
\begin{gather*}
\sum_{m=0}^\infty E_P(m)t^m=\sum_{i=0}^{d-1}\left(e_i\cdot\sum_{m=0}^\infty
m^i\cdot t^m\right),
\end{gather*}
each term $\sum_{m=0}^\infty
m^i\cdot t^m$, for $m>0$, is a polylogarithm $\Li_{-i}(t)$ given by
$(t\frac{\de}{\de t})^n(\frac{t}{1-t})$. A simple calculation shows
that $\Li_{-i}(t)$ is the quotient of a polynomial of degree $i-1$ by
$(1-t)^i$. Hence the sum in the display is the quotient of a polynomial of
degree at most $d-1$ by  $(1-t)^d$. The degree is truly $d-1$ as one
can check from the differential expression for $\Li_{-i}(t)$ above. 

Therefore, the Hilbert series $Q(t)$ of $Q_A$ is a polynomial of
degree $d-1$. We have proved
\begin{thm}
Let $S_A$ be standard graded. 
The $b$-function for restriction of $M_A(\beta)$ to a generic point
$x+p=0$ divides $s(s-1)\cdots(s-k+1)$ where $k$ denotes the highest
degree in which the quotient $S_A/S_A\cdot(A\cdot\calE\cdot\del)$ is
nonzero. If, in addition,  $S_A$ is normal then one may take
$k=d$.\qedhere{\qed} 
\end{thm}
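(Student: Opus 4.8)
The plan is to collect the ingredients that the preceding discussion has already assembled and assemble them into the two assertions of the theorem. The first assertion (divisibility of $b_p(s)$ by $s(s-1)\cdots(s-k+1)$) is essentially a restatement of what has been proved: the paragraph before the first displayed Claim reduces $b_p(\theta_p)\in H_A(\beta)+(V^1_pD)$ to the statement $b(\theta)\in \gr^0_{V_0}(H_A(\beta)_p)$; the Claim shows that for generic $\calE$ (equivalently, generic $p$, component-wise nonzero) the ideal $(I_A,A\cdot\calE\cdot\del)$ has radical $R_A\cdot\del$, hence contains $\del_A^k$ for the smallest $k$ in which $R_A/(I_A,A\cdot\calE\cdot\del)$ vanishes in degree $k$; and the last Lemma shows $E(E-1)\cdots(E-k+1)\in D_A\cdot\del_A^k$. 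Concatenating, $E(E-1)\cdots(E-k+1)\in\gr^0_{V_0}(H_A(\beta)_p)$, so $b_p(s)\mid s(s-1)\cdots(s-k+1)$. The only thing to say here is that $k$ is exactly the top degree in which $S_A/S_A\cdot(A\cdot\calE\cdot\del)=R_A/(I_A,A\cdot\calE\cdot\del)$ is nonzero, which is a tautology once the Claim guarantees this quotient is Artinian.

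Second, I would treat the normal case. First I would invoke Cohen--Macaulayness (Hochster's theorem: normal semigroup rings are Cohen--Macaulay), so that the $d$ generic linear forms $A\cdot\calE\cdot\del$ form a regular sequence on $S_A$; therefore the Hilbert series of $Q_A:=S_A/S_A\cdot(A\cdot\calE\cdot\del)$ is $H_{S_A}(t)\cdot(1-t)^d$. Since $A$ is standard graded and $S_A$ normal, $\dim_\CC (S_A)_m$ equals the number of lattice points of $mP$ where $P=\mathrm{conv}(\bolda_0,\ldots,\bolda_n)$, i.e. the Ehrhart polynomial $E_P(m)$, which has degree $d-1$. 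Writing $H_{S_A}(t)=Q(t)/(1-t)^d$, the Hilbert series of $Q_A$ is the numerator $Q(t)$, and the claim $k=d$ amounts to $\deg Q(t)=d-1$. To see this I would expand $\sum_m E_P(m)t^m$ in the monomial basis $\{m^i\}$ of the space of polynomials of degree $<d$, use that $\sum_m m^i t^m = \Li_{-i}(t) = (t\,\tfrac{\de}{\de t})^i\bigl(\tfrac{t}{1-t}\bigr)$ is a rational function whose numerator has degree exactly $i-1$ (this is the short calculation the text defers — I would just note that differentiating $t/(1-t)$ repeatedly by $t\,\de/\de t$ keeps the denominator a power of $1-t$ and raises the numerator degree by exactly one each time, with a nonzero leading coefficient). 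Summing, the top term comes from $i=d-1$ and contributes a numerator of degree $d-1$ with nonzero leading coefficient $e_{d-1}\neq 0$ (the leading Ehrhart coefficient is $\vol(P)>0$), so no cancellation occurs and $\deg Q(t)=d-1$. Hence $Q_A$ is nonzero in degree $d-1$ and zero above, giving $k=d$.

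I do not expect a genuine obstacle here, since all the hard work — chiefly the Claim on the genericity of $A\cdot\calE\cdot\del$ — has been done. The one place requiring care is making sure that the passage from the abstract inclusion $E(E-1)\cdots(E-k+1)\in\gr^0_{V_0}(H_A(\beta)_p)$ back to the $b$-function statement is clean: concretely, one must check that $\gr^0_{V_0}$ of the ideal generated by the $x$-twisted Euler operators and $I_A$ really is the $\gr_{V_0}(D_A)$-ideal generated by the $V_0$-symbols $A\cdot\calE\cdot\del$ of $E_p-\beta$ and (the $V_0$-homogeneous) $I_A$, with no extra symbols sneaking in from cross terms — but this is exactly the content of the paragraph preceding the Claim and follows because $I_A$ is $V_0$-homogeneous and the $E_p-\beta$ have $V_0$-leading term precisely $A\cdot\calE\cdot\del$. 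The subtler of the two quantitative points is verifying $\deg\Li_{-i}(t)=i-1$ with nonvanishing leading coefficient; I would dispatch it by the induction indicated above rather than a generating-function identity, so that the nonvanishing is manifest.
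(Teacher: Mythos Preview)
Your proposal is correct and follows the paper's argument essentially step for step: the reduction to $\gr^0_{V_0}(H_A(\beta)_p)$, the Claim on the radical, the Lemma on $E(E-1)\cdots(E-k+1)$, and the Ehrhart/polylogarithm computation for the normal case are exactly the paper's ingredients, assembled in the same order. Two small slips to fix: in your opening line you wrote ``divisibility of $b_p(s)$ by $s(s-1)\cdots(s-k+1)$'' when you mean the reverse, and your worry about ``extra symbols sneaking in'' is unnecessary since only the containment $(I_A,A\cdot\calE\cdot\del)\subseteq\gr_{V_0}(H_A(\beta)_p)$ is used.
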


\def\scr{\mathcal}

\bibliographystyle{alpha}
\bibliography{bfu}

\end{document}